\definecolor{refkey}{gray}{.75}
\definecolor{labelkey}{gray}{.2}
\newtheorem{theorem}{Theorem}[section]
\newtheorem{proposition}[theorem]{Proposition}
\newtheorem{lemma}[theorem]{Lemma}
\newtheorem{corollary}[theorem]{Corollary}
\newtheorem{conjecture}[theorem]{Conjecture}
\theoremstyle{definition}
\newtheorem{example}[theorem]{Example}
\theoremstyle{remark}
\newcommand{\x}{\times}
\newcommand{\G}{\mathcal{G}}
\newcommand{\EE}{\mathbb{E}}
\newcommand{\RR}{\mathbb{R}}
\newcommand{\NN}{\mathbb{N}}
\newcommand{\HH}{\mathcal{H}}
\newcommand{\FF}{\mathcal{F}}
\newcommand{\FFF}{\mathbb{F}}
\newcommand{\C}{\mathcal{C}}
\newcommand{\K}{\mathcal{K}}
\newcommand{\LL}{\mathcal{L}}
\newcommand{\Q}{\mathcal{Q}}
\newcommand{\TT}{\mathcal{T}}
\newcommand{\ft}{\widehat}
\newcommand{\mockalph}[1]{}
\tikzstyle{p}+=[fill=black, circle, minimum width = 1pt, inner sep =
\tikzstyle{w}+=[fill=white, draw, circle, minimum width = 1pt, inner sep =
\tikzset{every loop/.style={}}
\title{Around the positive graph conjecture}
\author{
David Conlon\thanks{Department of Mathematics, California Institute of Technology, United States. Email:{\tt dconlon@caltech.edu}. Research supported by NSF Awards DMS-2054452 and DMS-2348859.
}
\and
Joonkyung Lee\thanks{Department of Mathematics, Yonsei University,  South Korea. Email: {\tt joonkyunglee@yonsei.ac.kr}. Research supported by Samsung STF Grant SSTF-BA2201-02, the Yonsei University Research Fund 2023-22-0125 and the National Research Foundation of Korea (NRF) grant MSIT NRF-2022R1C1C1010300.
}
\and
Leo Versteegen\thanks{Department of Pure Mathematics and Mathematical Statistics, University of Cambridge, United Kingdom. Email: {\tt lvv23@dpmms.cam.ac.uk}.
}
}
\date{}
\begin{document}
\maketitle

\begin{abstract}
A graph $H$ is said to be positive if the homomorphism density $t_H(G)$ is non-negative for all weighted graphs $G$.  
The positive graph conjecture proposes a characterisation of such graphs, saying that a graph is positive if and only if it is symmetric, in the sense that it is formed by gluing two copies of some subgraph along an 
independent set. We prove several results relating to this conjecture. First, we make progress towards the conjecture itself by showing that any connected positive graph must have a vertex of even degree. We then make use of this result to identify some new counterexamples to the analogue of Sidorenko’s conjecture for hypergraphs. In particular, we show that, for $r$ odd, every $r$-uniform tight cycle is a counterexample, generalising a recent result of Conlon, Lee and Sidorenko that dealt with the case $r = 3$. Finally, we relate the positive graph conjecture to the emerging study of graph codes by showing that any positive graph has vanishing graph code density, thereby improving a result of Alon who proved the same result for symmetric graphs. Our proofs make use of a variety of tools and techniques, including the properties of independence polynomials, hypergraph quasirandomness and discrete Fourier analysis.
\end{abstract}

\section{Introduction}

Given a graph $H$, the \emph{homomorphism density} $t_H(W)$ of the graph $H$ in the kernel, i.e., the bounded measurable symmetric function, $W:[0,1]^2\rightarrow\mathbb{R}$ is defined by 
\[t_H(W)  = \int_{[0,1]^{V(H)}} \prod_{ij \in E(H)} W(x_i, x_j) \ d\mu^{V(H)},\]
where $\mu$ is the Lebesgue measure on $[0,1]$. 
Our concern here will be with a fundamental question about this functional first raised by Lov\'asz~\cite{L08}, namely, which graphs $H$ are \emph{positive} in the sense that  $t_H(W)\geq 0$ for all kernels $W$? 

One class of positive graphs $H$ comes from a special type of involutary automorphism. 
An automorphism $\phi$ of $H$ is a \emph{cut involution} if there exists a partition $L\cup R\cup F$ of $V(H)$ with $F$ a vertex cut separating $L$ and $R$ such that $\phi$ maps $L$ and $R$ to each other and fixes $F$. If the fixed point set $F$ contains no edges of $H$ (i.e., it is a stable set), then the cut involution $\phi$ is said to be a \emph{stable involution}.
If $H$ has such a stable involution, a simple application of the Cauchy--Schwarz inequality shows that $t_H(W)\geq t_{H[L\cup F]}(W)^2$ for all kernels $W$, so the existence of a stable involution automatically implies positivity.
The \emph{positive graph conjecture}~\cite{ACHLL12} states that the converse is also true.

\begin{conjecture}[Conjecture 1 in \cite{ACHLL12}]\label{conj:pgc} A graph $H$ is positive if and only if it has a stable involution.
\end{conjecture}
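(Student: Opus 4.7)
The forward direction is the Cauchy--Schwarz sketch already given in the text: folding $H$ along the stable involution $\phi$ expresses $t_H(W)$ as an expectation of a square, giving $t_H(W) \geq t_{H[L\cup F]}(W)^2 \geq 0$. All the interest lies in the converse, and this is where I would spend all my effort.

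For the converse, the strategy is to extract structural information about $H$ by probing $t_H$ with a rich family of test kernels. A natural starting point is the separable family $W(x,y)=f(x)f(y)$, for which $t_H(W)=\prod_{v \in V(H)} \mathbb{E}[f(X)^{\deg v}]$, together with random $\pm 1$-valued perturbations $W(x,y)= 1+\varepsilon\,\sigma(x)\sigma(y)$ and their higher-order analogues. Expanding $t_H(1+\varepsilon U)$ as a polynomial in $\varepsilon$ and demanding non-negativity for every sign pattern $\sigma$ should force parity conditions on the degree sequence and on the joint distribution of pairs of vertices of $H$. A first milestone is the even-degree-vertex condition flagged in the abstract, which a Fourier-analytic argument over $\mathbb{F}_2^{V(H)}$---matching the tools the abstract promises---appears well suited to deliver. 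The hope is to iterate such parity constraints until one identifies a candidate vertex partition $L \cup R \cup F$ with $F$ a stable cut.

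The second step is to promote such a combinatorial partition into an actual stable involution. Once a candidate stable cut $F$ is identified, one would argue that positivity forces $t_{H[L\cup F]}(W)=t_{H[R\cup F]}(W)$ for all $W$ extending a fixed labelling of $F$, which by standard hom-density rigidity (in the spirit of Lov\'asz's theorem that equal $t_H$ across all $W$ forces isomorphism) should yield an isomorphism $H[L\cup F]\cong H[R\cup F]$ fixing $F$ pointwise. That isomorphism is the desired cut involution, and stability of $F$ is inherited from the vertex partition. An overall induction on $|V(H)|$, quotienting by the recovered involution, would close the loop.

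The principal obstacle is unmistakably the first step: positivity is a highly non-local condition on $H$ and there is no obvious mechanism to read a vertex partition directly off the inequality $t_H(W)\geq 0$. My honest expectation is that the parity and moment obstructions one can derive from random $\pm 1$ probes, while strong enough for the even-degree-vertex theorem and its downstream consequences for hypergraph Sidorenko and graph-code density (the actual deliverables of this paper), fall well short of pinning down a full stable involution in general; bridging this gap will likely require genuinely new ideas, perhaps exploiting analytic structure such as the independence polynomial hinted at in the abstract, or a rigidity theorem of a qualitatively stronger flavour than those currently available.
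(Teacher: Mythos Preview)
The statement you are attempting to prove is Conjecture~\ref{conj:pgc}, which the paper explicitly presents as an \emph{open conjecture}, not a theorem. The paper does not prove it, nor does it claim to; on the contrary, the introduction stresses that ``there has been very little progress on the positive graph conjecture since it was first stated more than a decade ago,'' and the paper's actual contributions (Theorems~\ref{thm:even_deg}, \ref{thm:cycle}, \ref{thm:grid}, \ref{thm:code}) are partial results \emph{around} the conjecture, as the title indicates. There is therefore no proof in the paper for your proposal to be compared against.

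On the easy direction you correctly identify the Cauchy--Schwarz argument, which matches the paper's one-line remark preceding the conjecture. For the converse, your sketch is a reasonable-sounding research programme but, as you yourself concede in your final paragraph, it does not close. One specific misattribution is worth flagging: you suggest that the even-degree-vertex result (Theorem~\ref{thm:even_deg}) should come from a Fourier-analytic argument over $\mathbb{F}_2^{V(H)}$, but the paper's actual proof uses the independence polynomial $I_H$ of $H$ and the fact that its root of smallest modulus is real, simple and lies in $(0,1)$; the Fourier-analytic tools promised in the abstract are deployed only for the graph-code result (Theorem~\ref{thm:code}). Beyond that, your two-step plan of (i) extracting a candidate stable cut from parity constraints and (ii) upgrading it to an involution via hom-density rigidity has no counterpart in the paper, and step~(ii) in particular would require a relative version of Lov\'asz's reconstruction theorem (isomorphism fixing a prescribed boundary $F$ pointwise), which is not known and is itself a substantial gap.
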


This conjecture can be seen as an analogue for $H$-densities of kernels of Hilbert’s 17th problem, which asked if any non-negative polynomial can be expressed as a sum of rational squares. To bear out the analogy, we note that the positive graph conjecture is just asking if every positive graph is a `square’ formed by gluing two copies of some subgraph along a (possibly empty) independent set. Some evidence that proving the conjecture may be delicate comes from a result of Hatami and Norine~\cite{HN11} stating that homomorphism inequalities of the form $\alpha_1 t_{H_1}(W)+\cdots+\alpha_k t_{H_k}(W)\geq 0$ for some $\alpha_1,\ldots,\alpha_k\in \mathbb{R}$ and graphs $H_1, \dots, H_k$ that hold for all graphons $W$ (i.e., all non-negative kernels) are not necessarily representable as sums of squares. In fact, in this level of generality, the problem of deciding whether a given linear combination of homomorphism densities is always non-negative becomes undecidable (see also~\cite{BRW24} for a more recent undecidability result for the kernel case).

Perhaps because of such difficulties, there has been very little progress on the positive graph conjecture since it was first stated more than a decade ago. The original paper on the subject, by Antol\'in Camarena, Cs\'oka, Hubai, Lippner and Lov\'asz~\cite{ACHLL12}, contains a number of interesting partial results, including a proof of the conjecture for trees. Since then, the only significant progress was in a paper of Sidorenko~\cite{Sid22}, who relates the conjecture to its natural hypergraph generalisation. This result, which plays an important role in our proofs, will be discussed in more detail below. We first state a result which makes one more small step towards the conjecture.

\begin{theorem}\label{thm:even_deg}
    Every connected positive graph contains a vertex of even degree.
\end{theorem}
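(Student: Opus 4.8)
The plan is to prove the contrapositive: a connected graph $H$ in which every vertex has odd degree is not positive, which we witness by an explicit finite weighted graph $W$ with $t_H(W)<0$. One case is immediate: if $e(H)$ is odd then $t_H(-W)=(-1)^{e(H)}t_H(W)$, so taking $W\equiv 1$ (where $t_H(W)=1$) gives $t_H(-W)=-1<0$; so we may henceforth concentrate on the case $e(H)$ even. (Note also that only odd degrees forces $v(H)$ to be even, and that a connected graph with only odd degrees and an even number of edges is not a tree, hence contains a cycle.)

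The main construction is a three-state weighted graph that realises the independence polynomial $I(H;x)=\sum_{S}x^{|S|}$ of $H$, the sum being over independent sets $S$. Take vertex set $\{0,+,-\}$ with probability weights $(a,p,q)$ and edge weights $W_{00}=W_{0+}=1$, $W_{0-}=-1$ and $W_{++}=W_{+-}=W_{--}=0$. If a homomorphism $\phi\colon V(H)\to\{0,+,-\}$ contributes a nonzero term, then $S:=\phi^{-1}(\{+,-\})$ must be independent, every one of the $\deg(v)$ edges at a vertex $v\in S$ runs to state $0$, and since $\deg(v)$ is odd their combined weight is $W_{0,\phi(v)}^{\deg(v)}=W_{0,\phi(v)}\in\{\pm1\}$; summing over the two ``in''-states with weights $p,q$ produces a factor $p-q$ for each vertex of $S$. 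Hence
\[
t_H(W)=\sum_{S\ \mathrm{independent}}a^{\,v(H)-|S|}(p-q)^{|S|}=a^{\,v(H)}\,I\!\Big(H;\tfrac{p-q}{a}\Big),
\]
and as $(a,p,q)$ ranges over probability vectors with $p<q$, the argument $\tfrac{p-q}{a}$ sweeps out all of $(-\infty,0)$. Consequently, if $H$ is positive then $I(H;x)\ge0$ for all $x\le0$, and since $I(H;\cdot)$ has non-negative coefficients, $I(H;\cdot)\ge0$ on all of $\mathbb{R}$. (Replacing the weight $1$ on the $00$-edge by a general $s$ promotes this to a weighted — and for biregular bipartite $H$, bivariate with a tunable ratio — independence polynomial, which provides extra room below.)

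It remains to prove the combinatorial heart of the matter: the independence polynomial of a connected graph with only odd degrees and at least two vertices takes a negative value. Several ranges are easy. If $\alpha(H)$ is odd then $I(H;x)\to-\infty$ as $x\to-\infty$ while $I(H;0)=1$, so $I(H;\cdot)$ changes sign. If $\alpha(H)=1$ then $H=K_{v(H)}$ and $I(H;x)=1+v(H)x$. If $\alpha(H)=2$ then $\overline H$ is triangle-free, so $i_2(H)=e(\overline H)\le v(H)^2/4$ by Mantel's theorem, with equality only when $H$ is disconnected; thus for connected $H$ the quadratic $1+v(H)x+i_2(H)x^2$ has two distinct real roots and is negative between them. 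The main obstacle is the remaining case, $\alpha(H)$ even and at least $4$: one must exclude the possibility that $I(H;\cdot)$ is a globally non-negative polynomial, which does happen for some graphs, and this is precisely where the odd-degree and connectivity hypotheses must be exploited in earnest, via the structural theory of independence polynomials and the location of their real roots (for instance, over the roots $r$ of $I(H;\cdot)$ one has $\prod|r|=1/i_{\alpha(H)}\le1$, so a root of modulus at most $1$ always exists; the task is to force a real such root, necessarily in $[-1,0)$, at which $I(H;\cdot)$ changes sign).
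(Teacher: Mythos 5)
Your construction is essentially identical to the paper's: you build a three-state step kernel that realises (a scaled copy of) the independence polynomial of $H$, using the all-odd-degree hypothesis to turn the edge weights at a vertex mapped into the $\{+,-\}$ block into a clean sign, so that
\[
t_H(W)=a^{v(H)}\,I\bigl(H;(p-q)/a\bigr),
\]
and you correctly observe that $(p-q)/a$ can be driven to any negative real. From here the task reduces, exactly as in the paper, to showing that $I(H;\cdot)$ takes a negative value on $(-\infty,0)$.

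The gap is in that last step. You dispose of the cases $\alpha(H)$ odd, $\alpha(H)=1$, and $\alpha(H)=2$ by elementary means, but for $\alpha(H)$ even and at least $4$ you state the difficulty without resolving it, and you speculate that the odd-degree hypothesis must re-enter the argument here. It does not: the odd-degree hypothesis has already been fully spent in the reduction to the independence polynomial. What is missing is a classical theorem about independence polynomials, namely that for any connected graph with at least one edge, the root of $I(H;\cdot)$ of smallest modulus is real, simple, and lies in $(-1,0)$ (Fisher--Solow for the real-root/interval part, Goldwurm--Santini for simplicity; the paper cites these as \cite{FS90,GS00} and records the statement as Theorem~\ref{thm:indpolroot}). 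Since that root is simple, $I(H;\cdot)$ changes sign there, which finishes all cases at once and makes your case split on $\alpha(H)$ unnecessary. Your heuristic that $\prod|r|=1/i_{\alpha(H)}\le 1$ guarantees a root of modulus at most $1$ does not by itself force a \emph{real} root, which is exactly the content of the cited theorem, so as written the proof is incomplete. Supplying that citation (or a proof of it) closes the gap; without it, the argument does not establish the theorem.
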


To see how this connects to~\Cref{conj:pgc}, observe that if the conjecture is true for some graph $H$, then every vertex in the fixed point set $F$ of the stable involution of $H$ must have even degree, so, provided $H$ is connected, at least one vertex has even degree. \Cref{thm:even_deg} shows that this conclusion holds for positive graphs without first proving the conjecture. Perhaps surprisingly, the proof of~\Cref{thm:even_deg} makes use of some properties of independence polynomials.

To state the aforementioned result of Sidorenko~\cite{Sid22} more precisely, let us first generalise the homomorphism density to $r$-uniform hypergraphs (or $r$-graphs) $\HH$ and bounded measurable symmetric functions $W:[0,1]^r\rightarrow\mathbb{R}$, which we call \emph{$r$-kernels}, by setting 
\begin{align*}
    t_\HH(W)  = \int_{[0,1]^{v(\HH)}} \prod_{i_1i_2\cdots i_r \in E(\HH)} W(x_{i_1}, x_{i_2},\ldots,x_{i_r}) \ d\mu^{v(\HH)}.
\end{align*}
As for graphs, an $r$-graph $\HH$ is \emph{positive} if $t_\HH(W)\geq 0$ for all $r$-kernels $W$. For $r$ odd, Sidorenko's result establishes a close connection between the positivity of $r$-graphs and that of the corresponding Levi graph, where, given an $r$-graph $\HH$, its \emph{Levi graph} $\LL(\HH)$ is its bipartite edge-vertex incidence graph. That is, $\LL(\HH)$ has vertex set $V(\HH)\cup E(\HH)$ with $\{v,e\}$ an edge if and only if $v\in e$.

\begin{theorem}[Theorem~1.2 in~\cite{Sid22}]
    If $r\geq 3$ is an odd integer and $\HH$ is an $r$-graph, then $\HH$ is positive if and only if its Levi graph $\LL(\HH)$ is positive.
\end{theorem}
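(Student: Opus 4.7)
The plan is to use a single construction that passes between graph kernels and $r$-kernels. Given any graph kernel $W\colon[0,1]^2\to\RR$, define the associated $r$-kernel
\[
W_r(x_1,\ldots,x_r) \;=\; \int_0^1 W(y,x_1)\,W(y,x_2)\cdots W(y,x_r)\, dy,
\]
which is symmetric and bounded since $W$ is. In the integral defining $t_{\LL(\HH)}(W)$, every edge-vertex $e\in E(\HH)$ of $\LL(\HH)$ is adjacent to precisely the $r$ elements of $e$, so performing the integration over the auxiliary variable attached to each such edge-vertex first yields
\[
t_{\LL(\HH)}(W) \;=\; t_\HH(W_r).
\]
The forward direction is now immediate: if $\HH$ is positive, then $t_\HH(W_r)\geq 0$ for every graph kernel $W$, hence $t_{\LL(\HH)}(W)\geq 0$ for every $W$, i.e., $\LL(\HH)$ is positive. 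This half does not use the parity of $r$.

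For the reverse direction, assume that $\LL(\HH)$ is positive and let $W'$ be an arbitrary $r$-kernel; we want $t_\HH(W')\geq 0$. By approximation, it suffices to consider step $r$-kernels coming from a real-weighted $r$-graph $G$ on a finite vertex set $V$ with edge weights $w(e)\in\RR$ for $e\in\binom{V}{r}$. We lift $G$ to a real-valued bipartite step graphon $\tilde W$ on $V\sqcup\binom{V}{r}$ by assigning to the pair $(u,e)$ with $u\in e$ the weight $w(e)^{1/r}$. Here the assumption that $r$ is odd is crucial: real $r$-th roots are well-defined even when $w(e)<0$, so $\tilde W$ is a genuine symmetric, bounded, real-valued kernel. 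A direct expansion, using that every edge of $\HH$ has exactly $r$ vertices so that its incidences in $\LL(\HH)$ contribute $\prod_{v\in e}w(\psi(e))^{1/r}=w(\psi(e))$, shows that
\[
t_{\LL(\HH)}(\tilde W) \;=\; c\cdot t_\HH(W') \;+\; o(1),
\]
where $c>0$ is an explicit normalising constant and the $o(1)$ term collects degenerate homomorphisms that send two distinct vertices lying in a common edge of $\HH$ to the same vertex of $V$. These degenerate contributions can be made arbitrarily small by blowing $G$ up. Since $t_{\LL(\HH)}(\tilde W)\geq 0$ by hypothesis, passing to the limit gives $t_\HH(W')\geq 0$.

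The main obstacle lies in the reverse direction: one must carefully control the degenerate contributions to $t_{\LL(\HH)}(\tilde W)$, verify that the bipartite lift is a genuine kernel, and make the approximation step rigorous, say in the cut norm, so that $t_\HH$ behaves continuously. The construction makes essential use of the parity of $r$ through the real $r$-th root; for even $r$ this lift is unavailable for signed weights, consistent with the fact that the theorem is stated only in the odd case.
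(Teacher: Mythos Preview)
This theorem is not proved in the present paper; it is quoted from Sidorenko~\cite{Sid22} and used as a black box, so there is no in-paper argument to compare yours against.

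Assessing your sketch on its own: the forward direction is correct, and integrating out the edge-variables is exactly how one sees $t_{\LL(\HH)}(W)=t_\HH(W_r)$. The reverse direction, however, has a genuine gap beyond the degeneracies you flag. Since both $\LL(\HH)$ and your lift $\tilde W$ are bipartite, every non-vanishing contribution to $t_{\LL(\HH)}(\tilde W)$ sends $V(\HH)$ and $E(\HH)$ to opposite sides of $V\sqcup\binom{V}{r}$, and there are \emph{two} ways to do this. You only account for the orientation $V(\HH)\to V$, $E(\HH)\to\binom{V}{r}$, whose leading part is indeed a positive multiple of $t_\HH(W')$. The reversed orientation instead contributes terms of the form $\prod_{v\in V(\HH)} w(\phi(v))^{\deg_\HH(v)/r}$ subject to the incidence constraints $\psi(e)\in\phi(v)$, which bears no direct relation to $t_\HH(W')$. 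Worse, it is not lower order: already for $\HH$ a single $r$-edge (so $\LL(\HH)=K_{1,r}$) and constant weights, a direct count shows the reversed contribution exceeds your intended main term by a factor of order $|V|^{(r-1)^2}$, and blowing $G$ up only enlarges this ratio. Hence the claimed expansion $t_{\LL(\HH)}(\tilde W)=c\,t_\HH(W')+o(1)$ is false as written. The idea of exploiting odd $r$ via real $r$-th roots is the right one, but making it work requires suppressing the reversed orientation --- for instance by placing strongly asymmetric measures on the two parts of $\tilde W$ and isolating the correct coefficient --- and this calls for a substantially more careful argument than you outline.
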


Combining this with our~\Cref{thm:even_deg}, we obtain the following corollary.

\begin{corollary}\label{cor:odd}
    If $r\geq 3$ is an odd integer and $\HH$ is an $r$-graph where every vertex in $\HH$ has odd degree, then $\HH$ is not positive.
\end{corollary}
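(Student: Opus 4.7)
The plan is to derive the corollary by contradiction, using the immediate combination of \Cref{thm:even_deg} and Theorem 1.3 together with a short degree calculation in the Levi graph. Suppose $\HH$ is positive. Then by Theorem 1.3, the Levi graph $\LL(\HH)$ is also positive. The key computation is to show that every vertex of $\LL(\HH)$ has odd degree. Recalling that $V(\LL(\HH)) = V(\HH) \cup E(\HH)$: a vertex $v \in V(\HH)$ is adjacent in $\LL(\HH)$ to exactly the hyperedges containing it, so its degree in $\LL(\HH)$ equals $\deg_{\HH}(v)$, which is odd by hypothesis; a vertex $e \in E(\HH)$ is adjacent in $\LL(\HH)$ to exactly the $r$ hypergraph vertices contained in $e$, so its degree equals $r$, which is odd by the assumption on $r$.

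Assuming $\HH$ is connected, so too is $\LL(\HH)$, and applying \Cref{thm:even_deg} to the connected positive graph $\LL(\HH)$ yields a vertex of even degree in $\LL(\HH)$, contradicting the above. The connectedness hypothesis is necessary in some form: taking two disjoint copies of any connected $r$-graph $\HH_0$ with all odd degrees produces a disconnected $r$-graph with all odd degrees whose homomorphism density is a square and hence nonnegative, so some irreducibility on $\HH$ must be assumed (or at least that $\HH$ does not decompose as an even number of isomorphic pieces). I do not anticipate any significant obstacle beyond this point: the two cited theorems do all the real work, and the only bridge required between them is the elementary degree calculation carried out above.
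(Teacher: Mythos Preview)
Your approach is exactly the paper's: the corollary is stated immediately after Sidorenko's Levi-graph theorem with only the phrase ``Combining this with our \Cref{thm:even_deg}, we obtain the following corollary'', and the degree computation you carry out (vertex-type vertices have degree $\deg_\HH(v)$, edge-type vertices have degree $r$) is the intended bridge.

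You are also right to flag connectedness. Your counterexample is valid: if $\HH_0$ is a single $r$-edge with $r$ odd, then $\HH=\HH_0\sqcup\HH_0$ has every vertex of degree~$1$, yet $t_\HH(W)=t_{\HH_0}(W)^2\ge 0$, so $\HH$ is positive. Thus the corollary as written needs a connectedness hypothesis on $\HH$ (equivalently on $\LL(\HH)$), which the paper omits but tacitly uses---every application in the paper (tight cycles, grids) is to connected hypergraphs. With that hypothesis added, your proof is complete and matches the paper's.
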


In particular, this result applies to $\C_{\ell}^{(r)}$, the $r$-uniform \emph{tight cycle} with vertex set $\{1, 2, \dots, \ell\}$ and edges $\{i, i+1, \dots, i+r-1\}$ for all $i = 1, 2, \dots, \ell$, where addition is taken mod $\ell$, showing that, for any $r \geq 3$ odd and any $\ell\geq r$, $\C_{\ell}^{(r)}$ is not positive.

Despite its seeming simplicity, \Cref{cor:odd} already allows us to prove some new results about the hypergraph generalisation of Sidorenko's conjecture. For graphs, this celebrated conjecture~\cite{Sid92,Sid93}, which remains wide open (see, for example,~\cite{CL21} and its references), states that $t_H(W)\geq t_{K_2}(W)^{e(H)}$ for any bipartite graph $H$ and any graphon $W$. The natural hypergraph analogue, which states that $t_\HH(W)\geq t_{\K_r}(W)^{e(\HH)}$ for any $r$-partite $r$-graph $\HH$ and any $r$-graphon $W$ (i.e., any non-negative $r$-kernel), is false, as was shown already by Sidorenko~\cite{Sid93}.

However, it is still interesting to ask which $r$-graphs satisfy 
Sidorenko's conjecture. For the sake of brevity, let us say that an $r$-graph $\HH$ is \emph{Sidorenko} if $t_\HH(W)\geq t_{\K_r}(W)^{e(\HH)}$ for all $r$-graphons $W$. Very recently, Conlon, Lee and Sidorenko~\cite{CLS24} showed that if an $r$-partite $r$-graph $\HH$ is not Sidorenko, then it is possible to improve the lower bound for the \emph{extremal number} $\mathrm{ex}(n,\HH)$ of $\HH$, the maximum number of edges in an $\HH$-free $r$-graph with $n$ vertices, by a polynomial factor over the classical bound coming from the probabilistic deletion method. In turn, by a result of Ferber, McKinley and Samotij~\cite{FMS20}, this implies an optimal bound on the number of $\HH$-free $r$-graphs with $n$ vertices along an infinite sequence of values of $n$. 

Motivated by these applications, several new examples of non-Sidorenko hypergraphs were found in~\cite{CLS24}, including all linear $r$-graphs of odd girth and all $3$-uniform tight cycles. Here, by combining~\Cref{cor:odd} with extra ideas involving quasirandomness, we extend this latter result by showing that $r$-uniform tight cycles are not Sidorenko for any odd $r$. 

\begin{theorem}\label{thm:cycle}
    If $r \ge 3$ is odd and $\ell > r$, then $\C_{\ell}^{(r)}$ is not Sidorenko.
\end{theorem}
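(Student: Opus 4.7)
My plan splits the proof into two cases based on whether $r\mid\ell$.

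\textbf{Case $r\nmid\ell$:} I would observe that $\mathcal{C}_\ell^{(r)}$ admits no proper $r$-edge colouring: any such colouring must satisfy $c(i)=c(i+r)$ for all $i$ (so that consecutive rainbow edges agree), making $c$ constant on the $\gcd(r,\ell)<r$ orbits of the shift-by-$r$ map on $\mathbb{Z}/\ell\mathbb{Z}$, contradicting the requirement of $r$ distinct colours per edge. Consequently, the symmetric $r$-graphon $W$ supported on ``rainbow'' edges of an equipartition of $[0,1]$ into $r$ equal blocks satisfies $t_{\mathcal{C}_\ell^{(r)}}(W)=0 < t_{\mathcal{K}_r}(W)^\ell$, already violating Sidorenko.

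\textbf{Case $r\mid\ell$:} Every vertex has odd degree $r$, so \Cref{cor:odd} yields a signed symmetric $r$-kernel $U$ with $t_{\mathcal{C}_\ell^{(r)}}(U)<0$. To convert this signed witness into a non-negative graphon, I would tensor $U$ with a Cayley-type $r$-kernel $R$ on a finite abelian group $G=\mathbb{F}_p^n$, defining on $([0,1]\times G)^r$ the symmetric graphon
\[
    W\bigl((x_1,g_1),\dots,(x_r,g_r)\bigr)=c+\epsilon\,U(x_1,\dots,x_r)\,R(g_1,\dots,g_r),
\]
with $c>0$, $\epsilon>0$ chosen so that $W\geq 0$, and $R$ with $\mathbb{E} R=0$ so that $t_{\mathcal{K}_r}(W)=c$. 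Expanding binomially and using the multiplicativity $t_F(UR)=t_F(U)\,t_F(R)$ over tensor products,
\[
    t_{\mathcal{C}_\ell^{(r)}}(W)=c^\ell+\sum_{|S|\geq 2}c^{\ell-|S|}\,\epsilon^{|S|}\,t_{H_S}(U)\,t_{H_S}(R),
\]
where $H_S$ denotes the sub-hypergraph of $\mathcal{C}_\ell^{(r)}$ on the same vertex set with edge set $S$. The aim is to choose $R$ so that the $S=E(H)$ term $\epsilon^\ell\,t_{\mathcal{C}_\ell^{(r)}}(U)\,t_{\mathcal{C}_\ell^{(r)}}(R)<0$ dominates the sum.

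The technical core is the discrete Fourier construction of $R$. A natural candidate is $R(g_1,\dots,g_r)=\sum_{\chi\neq 1}\chi(g_1+\cdots+g_r)$, for which $t_F(R)$ reduces to counting character assignments $(\chi_e)_{e\in E(F)}$ with $\chi_e\neq 1$ satisfying the vertex balance $\prod_{e\ni v}\chi_e=1$. The circulant edge-vertex incidence of the tight cycle makes this count, for $F=\mathcal{C}_\ell^{(r)}$, equal to the number of zero-sum $r$-tuples of non-zero elements of $\hat G$, which is strictly positive, so $t_{\mathcal{C}_\ell^{(r)}}(R)>0$; and for any proper $F\subsetneq\mathcal{C}_\ell^{(r)}$ containing a vertex in exactly one $F$-edge, the balance forces some $\chi_e=1$ and hence $t_F(R)=0$. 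The main obstacle is handling residual proper sub-hypergraphs, such as those obtained by removing ``antipodal'' sets of edges from $\mathcal{C}_\ell^{(r)}$, which leave every vertex with degree at least $2$ and so need not have $t_F(R)=0$. Here I would either refine the Cayley construction (e.g.\ by combining several characters, passing to higher-dimensional quotients of $G$, or adding an auxiliary quasirandom layer) or exploit the freedom in the choice of $U$ to control the sign of $t_F(U)$ on these specific residual configurations, ensuring that the leading negative contribution from $S=E(H)$ survives. This is the main new technical content, generalising the $r=3$ analysis of Conlon-Lee-Sidorenko~\cite{CLS24} to arbitrary odd $r$ via a careful combinatorial-Fourier analysis of the circulant structure.
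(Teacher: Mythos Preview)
Your case split and overall perturbative framework match the paper: the non-$r$-partite case is dismissed identically, and in the $r\mid\ell$ case both you and the paper look at $c+\varepsilon\,(\text{non-positive witness})\otimes(\text{quasirandom kernel})$ and expand. The divergence is entirely in the choice of the quasirandom kernel, and here your argument has a genuine gap.

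Your Cayley kernel $R(g_1,\dots,g_r)=\sum_{\chi\neq 1}\chi(g_1+\cdots+g_r)$ kills only those sub-hypergraphs $F$ possessing a vertex of degree~$1$ in $F$. But proper sub-hypergraphs of $\C_\ell^{(r)}$ with minimum degree $\ge 2$ exist in abundance. Already for $r=3$, $\ell=6$, deleting the two ``antipodal'' edges $\{1,2,3\}$ and $\{4,5,6\}$ leaves a $4$-edge $2$-regular $F$ with $t_F(R)=|G|-1>0$; this contributes at order $\varepsilon^{4}$, strictly \emph{lower} than the full-cycle term at order $\varepsilon^{6}$, so it dominates rather than being dominated. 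Your phrase ``ensuring that the leading negative contribution from $S=E(H)$ survives'' has the hierarchy backwards: the problematic terms appear at lower, not higher, order in $\varepsilon$. The remedies you list (combining characters, higher-dimensional quotients, tweaking $U$) are not developed, and there is no evident reason any of them kill these $2$-regular fragments while leaving $t_{\C}(R)\neq 0$; indeed any $R$ of the additive/character type will satisfy $t_F(R)\neq 0$ for every sub-hypergraph whose incidence matrix has the same $\FFF_p$-rowspace as a submatrix of that of $\C$, and this class is much larger than $\{\C\}$.

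The paper's resolution is to replace your $R$ by the sequence $U_n=\G(\TT_n)-2^{1-r}$ coming from uniform random $(r-1)$-tournaments, which is $\Q$-quasirandom for $\Q=\{[r-1]\}\cup\binom{[r]}{r-2}$. The point is not a degree condition on $F$ but a local one on a single edge: any proper $F\subsetneq\C_\ell^{(r)}$ contains an edge $e$ adjacent to a \emph{missing} edge, and then every other edge of $F$ meets $e$ either inside one fixed $(r-1)$-set or in some $(r-2)$-set, which is exactly the closure of $\Q$. Hence every proper $F$ is $\Q$-vanishing and $t_F(U_n)\to 0$, regardless of the degree sequence of $F$. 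A direct computation in the tournament model then shows $t_{\C}(\G(\TT_n))\to 2\cdot 2^{(1-r)\ell}\neq 2^{(1-r)\ell}$, so $t_{\C}(U_n)\not\to 0$, and \Cref{prop:nonSidorenko} together with \Cref{cor:odd} finishes. This combinatorial choice of $\Q$ and the tournament construction realising it are precisely the missing ingredient in your sketch.
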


Our methods also allow us to give a short alternative proof of another result from~\cite{CLS24}, namely, that the \emph{grid $r$-graph} $\G_r$ with vertex set $[r]^2$ where each row $\{(i,j):j\in[r]\}$ and each column $\{(i,j):i\in[r]\}$ forms an edge is not Sidorenko for odd $r$. The result proved in~\cite{CLS24} is actually somewhat stronger, saying that $\G_r$ is not \emph{common} for odd $r$. However, since the question of whether $\G_3$ is Sidorenko was raised by Gowers and Long~\cite{GL}, we felt that it was worth noting the simpler proof.

\begin{theorem}\label{thm:grid}
     If $r\geq 3$ is odd, then the grid $r$-graph $\G_r$ is not Sidorenko.
\end{theorem}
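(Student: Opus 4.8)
The plan is to realise the required $r$-graphon as a small perturbation of the constant function $1$. Fix a bounded symmetric $f\colon[0,1]^r\to\RR$ all of whose one-variable margins vanish, that is, $\int_0^1 f(x_1,\dots,x_r)\,d x_k\equiv 0$ for every $k$; by symmetry a single such identity suffices, and in particular $\int f=0$. For small $\varepsilon>0$ the kernel $W:=1+\varepsilon f$ is a nonnegative $r$-graphon with $t_{\K_r}(W)=1$, and expanding
\[
t_{\G_r}(W)=\int_{[0,1]^{V(\G_r)}}\prod_{e\in E(\G_r)}\bigl(1+\varepsilon f(x_e)\bigr)\,d\mu
=\sum_{S\subseteq E(\G_r)}\varepsilon^{|S|}\int\prod_{e\in S}f(x_e)\,d\mu ,
\]
we see that a summand vanishes as soon as some vertex lies in exactly one edge of $S$ (integrate out that variable first to produce a margin of $f$). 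Since every vertex of $\G_r$ lies in exactly two edges, one row and one column, the surviving $S$ are exactly those with $R_i\in S\iff C_j\in S$ for all $i,j$ (writing $R_i$, $C_j$ for the $i$-th row and $j$-th column edge), namely only $S=\emptyset$ and $S=E(\G_r)$. Hence $t_{\G_r}(W)=1+\varepsilon^{2r}\,t_{\G_r}(f)$, and \Cref{thm:grid} reduces to producing a single such $f$ with $t_{\G_r}(f)<0$.

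To get a handle on $t_{\G_r}(f)$, I would pass to discrete Fourier analysis, taking $f$ to live over $\mathbb{Z}_N$ for large $N$. The margin hypothesis says precisely that $\widehat f$ is supported on tuples with all coordinates nonzero, while $\widehat f$ is symmetric with $\widehat f(-v)=\overline{\widehat f(v)}$. A routine expansion then gives
\[
t_{\G_r}(f)=\sum_{\mathbf a}\ \prod_{i=1}^r\widehat f(\mathrm{row}_i\,\mathbf a)\ \prod_{j=1}^r\overline{\widehat f(\mathrm{col}_j\,\mathbf a)}
=\bigl\langle\widehat f^{\,\otimes r},\ \Pi\,\widehat f^{\,\otimes r}\bigr\rangle ,
\]
the sum running over $r\times r$ matrices $\mathbf a$ with nonzero entries, where $\Pi$ is the permutation operator transposing the $r\times r$ array of tensor legs. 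As $\Pi$ is an involution it has eigenvalues $\pm1$, so $t_{\G_r}(f)<0$ becomes possible exactly when $\widehat f^{\,\otimes r}$ weighs more on the $(-1)$-eigenspace of $\Pi$ than on the $(+1)$-eigenspace. For $r=2$ a short manipulation rewrites the right-hand side as $\tr\bigl((AA^{*})^{2}\bigr)\ge 0$, where $A$ is the matrix $\widehat f$, reflecting that $\G_2$ is Sidorenko; the content of the theorem is that for odd $r$ no such positivity obstruction survives. This is also the contact point with the rest of the paper: the displayed sum is a homomorphism count on $K_{r,r}$, whose edge-subdivision is the Levi graph $\LL(\G_r)$, and it is the odd parity of $r$ that lets this quantity go negative — in the same spirit as \Cref{cor:odd}, which nevertheless does not apply to $\G_r$ itself because every vertex of $\G_r$ has even degree $2$.

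The remaining, and hardest, step is to construct such an $f$. The obstacle is that every overly symmetric choice of $\widehat f$ — products, single group orbits, low-rank combinations — collapses $t_{\G_r}(f)$ into a nonnegative weighted count: conservation of the total ``charge'' of $\mathbf a$ forces the competing phases to recombine into squares. One must therefore manufacture $\widehat f$ from a genuinely asymmetric gadget, and here I would follow the blueprint of the proof of \Cref{thm:cycle}: start from a weighting that witnesses the failure of positivity of the structure attached to $\G_r$ — concretely the subdivision $\LL(\G_r)$ of $K_{r,r}$, using that $r$ is odd — then ``spread it out'' by tensoring with a hypergraph-quasirandom weight, thereby annihilating all one-variable margins while preserving the negativity of the homomorphism count, and finally read off $f$ and $W=1+\varepsilon f$. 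The quasirandom spreading is routine once the gadget is in hand; the real work — and the place where a new idea beyond \Cref{cor:odd} is needed, precisely because $\G_r$ has no odd-degree vertex — lies in producing that gadget, i.e.\ in defeating the parity-driven cancellations described above.
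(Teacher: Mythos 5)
Your first paragraph is correct and is essentially the reduction the paper uses: requiring $f$ to have vanishing one-variable marginals (equivalently, requiring the perturbation to be quasirandom in the sense of $\Q=\{[r-1]\}$) kills every term in the expansion of $t_{\G_r}(1+\varepsilon f)$ except $S=\emptyset$ and $S=E(\G_r)$, because $\G_r$ is $2$-regular and linear. This matches the proof of \Cref{thm:2reg} as specialised to $\G_r$. The Fourier reformulation in your second paragraph is a correct but unnecessary detour.

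The genuine gap is that you never construct the $f$, and your assessment of the difficulty is inverted: you dismiss ``products'' and tensor-type constructions as doomed by cancellation, and conclude that manufacturing the gadget is where the real work lies. In fact the paper produces $f$ in one line by exactly a tensor product. Take $U(y_1,\dots,y_r)=y_1\cdots y_r$ on $\{\pm1\}^r$ with uniform measure; then $t_\FF(U)=1$ if every vertex of $\FF$ has even degree and $0$ otherwise, so $t_{\G_r}(U)=1$ while $t_\FF(U)=0$ for all non-empty proper $\FF$, and $U$ has vanishing one-variable marginals. By Sidorenko's result \cite[Proposition 1.5]{Sid22}, $\G_r$ is not positive for odd $r$, so there is a bounded kernel $W$ with $t_{\G_r}(W)<0$. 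Setting $f:=W\otimes U$ inherits the vanishing marginals from $U$, and since tensor products multiply homomorphism densities, $t_{\G_r}(f)=t_{\G_r}(W)\,t_{\G_r}(U)=t_{\G_r}(W)<0$. This is precisely the $f$ your reduction needs, and it is a product --- the obstruction you describe applies to Fourier-side factorisations $\widehat f(v)=\prod_i g(v_i)$, not to tensor products over an augmented measure space. The other missing ingredient you gesture at but never invoke is the non-positivity of $\G_r$ itself, which does not follow from \Cref{cor:odd} (every vertex of $\G_r$ has even degree $2$) and must be imported from \cite{Sid22}; without it neither your outline nor the paper's argument closes.
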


Our last main result connects~\Cref{conj:pgc} to a problem of Alon~\cite{A24} that arose from the emerging study of graph codes (see~\cite{AGKMS23} for much more on questions of this type). Let $d_H(n)$ be the maximum cardinality of a family of graphs on $[n]$ without two members whose symmetric difference is a copy of $H$, where we normalise by dividing out a factor of  $2^{\binom{n}{2}}$, the total number of labelled graphs on $n$ vertices. The particular problem raised by Alon asks for a classification of those graphs $H$ for which $d_H(n)=o(1)$. He suggested that perhaps $d_H(n)=o(1)$ for any graph with an even number of edges and observed that this is indeed true for linear codes (see also~\cite{V24}). He also showed that every graph $H$ with a stable involution satisfies $d_H(n)=O_H(1/n)$. Here we show that the same conclusion holds for all positive graphs, thereby generalising Alon's result.

\begin{theorem}\label{thm:code}
    Every positive graph $H$ satisfies $d_H(n)=O_H(1/n)$.
\end{theorem}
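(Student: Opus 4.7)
The plan is to set up discrete Fourier analysis on $\FFF_2^{E(K_n)}$, identifying each graph $G$ on $[n]$ with the indicator of its edge set so that symmetric difference becomes group addition. Write $N = \binom{n}{2}$, use the characters $\chi_S(x) := (-1)^{\sum_{e \in S} x_e}$ indexed by $S \subseteq E(K_n)$, and let $\widehat{f}(S) := \sum_x f(x)\chi_S(x)$ denote the (unnormalised) Fourier transform. For a family $\mathcal{F}$ of graphs on $[n]$ with no two members whose symmetric difference is isomorphic to $H$, the number of ordered pairs $(G_1,G_2) \in \mathcal{F}^2$ with $G_1 \triangle G_2 = H^*$ vanishes for every labelled copy $H^* \subseteq K_n$ of $H$. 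Summing this over all such $H^*$ and passing to the Fourier side via the convolution formula produces the key identity
\begin{equation*}
\sum_{S \subseteq E(K_n)} \widehat{\one_\mathcal{F}}(S)^2 \, \Psi_H(S) \,=\, 0, \qquad \text{where} \quad \Psi_H(S) \,:=\, \sum_{\substack{H^* \cong H \\ H^* \subseteq K_n}} (-1)^{\abs{S \cap H^*}},
\end{equation*}
in which the $S = \emptyset$ term contributes $\abs{\mathcal{F}}^2 \, N_H$ with $N_H = \Theta_H(n^{\abs{V(H)}})$ the number of labelled copies of $H$ in $K_n$.

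The heart of the argument is a uniform lower bound
\begin{equation*}
\Psi_H(S) \,\geq\, -C_H \, n^{\abs{V(H)}-1} \qquad \text{for every } S \subseteq E(K_n),
\end{equation*}
which I would derive from positivity of $H$ as follows. Associate to $S$ the $\pm 1$-valued step-function kernel $W_S : [0,1]^2 \to \set{-1,+1}$ that equals $-1$ on the $(i,j)$-cell precisely when $ij \in S$. Positivity of $H$ gives $t_H(W_S) \geq 0$, and expanding the defining integral cell by cell yields
\begin{equation*}
n^{\abs{V(H)}} \, t_H(W_S) \,=\, \sum_{\phi : V(H) \to [n]} \prod_{ij \in E(H)} W_S(\phi(i), \phi(j)).
\end{equation*}
The injective maps $\phi$ contribute exactly $\abs{\mathrm{Aut}(H)} \, \Psi_H(S)$, since each labelled copy of $H$ in $K_n$ arises from precisely $\abs{\mathrm{Aut}(H)}$ injections; the non-injective maps contribute at most $O_H(n^{\abs{V(H)}-1})$ in absolute value because there are only that many of them and each factor has modulus $1$. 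Rearranging gives the claimed inequality.

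To finish, isolate the $S = \emptyset$ term in the Fourier identity, apply the lower bound for $S \neq \emptyset$, and use Parseval's identity $\sum_S \widehat{\one_\mathcal{F}}(S)^2 = 2^N \abs{\mathcal{F}}$ to obtain
\begin{equation*}
\abs{\mathcal{F}}^2 \, N_H \,=\, -\sum_{S \neq \emptyset} \widehat{\one_\mathcal{F}}(S)^2 \, \Psi_H(S) \,\leq\, C_H \, n^{\abs{V(H)}-1} \cdot 2^N \abs{\mathcal{F}}.
\end{equation*}
This yields $d_H(n) = \abs{\mathcal{F}}/2^N \leq C_H \, n^{\abs{V(H)}-1}/N_H = O_H(1/n)$, as required.

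The main obstacle is the positivity-to-Fourier conversion in the middle step: positivity only supplies a one-sided (lower) bound on $\Psi_H(S)$, not a two-sided bound, but this is exactly what fits into the Fourier identity because the weights $\widehat{\one_\mathcal{F}}(S)^2$ are nonnegative. The same scheme also recovers Alon's bound in the symmetric case without appealing to his explicit Cauchy--Schwarz estimate $t_H(W) \geq t_{H[L \cup F]}(W)^2$; here we only use positivity of $H$ as a black box.
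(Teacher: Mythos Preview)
Your proposal is correct and follows essentially the same approach as the paper: the paper packages the Fourier argument as a lemma bounding $d_H(n)$ by $-(\lvert\FFF_2^{K_n}\rvert/\lvert B_n\rvert)\min_y \ft{1_B}(y)$ and a proposition showing that positivity of $H$ forces $\ft{1_B}(x)>-Cn^{v(H)-1}/\lvert\FFF_2^{K_n}\rvert$, which is exactly your $\Psi_H$ bound up to normalisation. The only substantive addition in the paper is a converse (non-positive $H$ forces a Fourier coefficient of order $-cn^{v(H)}/\lvert\FFF_2^{K_n}\rvert$), which is not needed for \Cref{thm:code} itself.
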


Of course, if~\Cref{conj:pgc} is true, then this result is equivalent to Alon's theorem. But if not, it is a strictly stronger result than Alon's. Our own interpretation of the result is that it gives yet more evidence for the truth of the positive graph conjecture.

\section{Even-degree vertices in positive graphs}
Following~\cite{Cs13}, we define the \emph{independence polynomial} $I_H(x)$ of a graph $H$ to be
\begin{align*}
    I_H(x) = \sum_{k\geq 0} i_k(H)(-x)^k,
\end{align*}
where $i_k(H)$ denotes the number of independent sets of order $k$ in $H$. To prove~\Cref{thm:even_deg}, the statement that every connected positive graph has a vertex of even degree, we make use of a classical fact about independence polynomials.

\begin{theorem} \label{thm:indpolroot}
    If $H$ is a connected graph with at least one edge, then the root of $I_H(x)$ of  smallest absolute value is real, simple and lies in $(0,1)$. 
\end{theorem}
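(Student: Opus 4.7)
The plan is to recast the statement in terms of $Z_H(x) := I_H(-x) = \sum_k i_k(H) x^k$, which has non-negative coefficients, so that the goal becomes showing that $Z_H$ has a simple root in $(-1, 0)$ of smallest absolute value among all complex roots. I would proceed by strong induction on $|V(H)|$, strengthening the inductive hypothesis to include a monotonicity property: for every connected graph $H'$ with at least one edge, $Z_{H'}$ has a simple real negative root $\alpha(H') \in (-1, 0)$ which is the unique smallest-modulus root in $\mathbb{C}$, one has $Z_{H'}>0$ on $(\alpha(H'), 0]$, and $\alpha(H') > \alpha(H'-v)$ for every vertex $v$ (with the convention that $\alpha$ of a disconnected graph is the minimum over its components, with $\alpha=-1$ assigned to isolated-vertex components and $\alpha = +\infty$ to the empty graph).

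The base case $H = K_2$, where $Z_{K_2}(x) = 1+2x$, is immediate. For the inductive step, I would pick any vertex $v \in V(H)$ and use the standard deletion recursion
\[
Z_H(x) = Z_{H-v}(x) + x \cdot Z_{H-N[v]}(x),
\]
where $N[v]$ denotes the closed neighborhood of $v$. Iterating the inductive monotonicity, since $H-N[v]$ is obtained from $H-v$ by further vertex deletions, we have $\alpha(H-v) \geq \alpha(H-N[v])$; in particular, $Z_{H-N[v]}(\alpha(H-v)) > 0$. Combined with $\alpha(H-v) < 0$, this yields $Z_H(\alpha(H-v)) = \alpha(H-v) \cdot Z_{H-N[v]}(\alpha(H-v)) < 0$, while $Z_H(0)=1$, so the intermediate value theorem produces a real root $\alpha(H) \in (\alpha(H-v), 0) \subset (-1, 0)$. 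Taking $\alpha(H)$ to be the rightmost such root ensures $Z_H > 0$ on $(\alpha(H), 0]$; simplicity follows by differentiating the recursion at $\alpha(H)$ and using the signs of $Z_{H-v}$ and $Z_{H-N[v]}$ there; and the updated monotonicity for $H$ is inherited by running the same argument for any other vertex.

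The main obstacle will be verifying that $\alpha(H)$ is the minimum-modulus root of $Z_H$ over all of $\mathbb{C}$, not merely on the negative real axis. To address this, I would apply a Rouch\'e-type argument comparing $Z_{H-v}(z)$ and $z \cdot Z_{H-N[v]}(z)$ on circles $|z| = r$ with $r$ slightly less than $|\alpha(H-v)|$. The inductive uniqueness of the smallest-modulus root for $Z_{H-v}$ and $Z_{H-N[v]}$, together with the non-negativity of their coefficients, should give the dominant-term estimate $|Z_{H-v}(z)| > |z \cdot Z_{H-N[v]}(z)|$ on such circles, which forces $Z_H$ to have the same number of zeros inside the disk as $Z_{H-v}$ --- namely, none. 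Letting $r$ increase towards $|\alpha(H)|$ and tracking the single zero that emerges on the negative real axis completes the argument. Controlling the simultaneous-vanishing behaviour as $z$ approaches $\alpha(H)$ along non-real directions is the delicate point, and will require exploiting both inductive hypotheses about the smaller graphs in a careful modulus comparison.
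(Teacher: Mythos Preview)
The paper does not prove this theorem; it is quoted from the literature (Fisher--Solow for realness and the location in $(0,1)$, Goldwurm--Santini for simplicity, with Csikv\'ari's survey as a reference). Your inductive scheme for producing a negative real root of $Z_H$ via the deletion recursion, together with the monotonicity $\alpha(H)>\alpha(H-v)$, is essentially the standard route to the real-root and simplicity parts; simplicity is in fact cleaner via the identity $Z_H'(x)=\sum_{v}Z_{H-N[v]}(x)$, each summand of which is strictly positive at $\alpha(H)$ by the monotonicity, than by differentiating the single-vertex recursion. One small slip: for a disconnected graph the smallest-modulus root of $Z_G=\prod_i Z_{C_i}$ comes from the component whose $\alpha(C_i)$ is \emph{closest to zero}, so your convention should read ``maximum over components'', not ``minimum''.

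The substantive gap is in the Rouch\'e step, and it is a genuine one. As written the argument is internally inconsistent: you take $r$ just below $|\alpha(H-v)|$ and claim that $Z_H$ then has no zeros inside, but you have already exhibited the zero $\alpha(H)$ strictly inside that disk, since $|\alpha(H)|<|\alpha(H-v)|$. Even after correcting to $r<|\alpha(H)|$, the inequality $|Z_{H-v}(z)|>|z|\cdot|Z_{H-N[v]}(z)|$ on the whole circle does not follow from anything in your inductive package. Non-negativity of coefficients yields only the upper bound $|Z_{H-N[v]}(z)|\le Z_{H-N[v]}(|z|)$; there is no matching lower bound on $|Z_{H-v}(z)|$, and the natural guess $|Z_{H-v}(z)|\ge Z_{H-v}(-|z|)$ is false for general polynomials with non-negative coefficients (already $1+z+z^2$ on $|z|=1$). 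Nothing in ``smallest root is real and simple'' for $H-v$ rules out such behaviour. The proofs in the cited references bypass this comparison entirely: one shows that the Taylor coefficients of $1/I_H(x)$ about $0$ are all non-negative (combinatorially, they count equivalence classes in the Cartier--Foata trace monoid on $V(H)$, or equivalently arise from the cluster expansion of the hard-core model), and then Pringsheim's theorem forces the singularity of smallest modulus---that is, the smallest root of $I_H$---to lie on the positive real axis. If you want to rescue the Rouch\'e plan you would essentially need to prove a lower bound of the shape $|I_{H-v}(z)|\ge I_{H-v}(|z|)$ on the relevant disk, and the only route I know to that passes through the same non-negative-coefficients fact, so it is not an independent argument.
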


Apart from the simplicity of the root, \Cref{thm:indpolroot} follows from Theorems 2 and 3 in \cite{FS90}. We refer the interested reader to~\cite{Cs13} for a modern overview as well as a proof of  simplicity, though we note that this was first proved in~\cite{GS00}.

\begin{proof}[Proof of~\Cref{thm:even_deg}]
    Suppose that $H$ is a connected graph with no even-degree vertices.
    Let $\alpha>0$ be a constant to be determined later and let $W$ be the kernel defined by
    \begin{align*}
    W(x,y)=
        \begin{cases}
        0 ~~~~\text{if } x,y\in [0,\alpha], \\ 
        1 ~~~~\text{if } x,y\in (\alpha,1] \text{ and } \\ 
        -1 ~~\text{otherwise.}
        \end{cases}
    \end{align*}
    This is a weighted variant of the looped graph $G=\begin{tikzpicture}[baseline=(x.base)]
    \draw 
    (0,0) node[p](x){}
    (0.4,0.0) node[p](y){};
    \draw (x)--(y);
    \path [-] (y) edge[loop] (y);
    \end{tikzpicture}$ such that $t_H(G)$ counts the (normalised) number of independent sets.
    Then 
    \begin{align*}
        t_H(W) &= \sum_{k=0}^v i_k(H)(-\alpha)^k(1-\alpha)^{v-k} \\
        &= (1-\alpha)^v\sum_{k=0}^v i_k(H)\left(-\frac{\alpha}{1-\alpha}\right)^k
        =(1-\alpha)^vI_H\left(\frac{\alpha}{1-\alpha}\right),
    \end{align*}
    where $v=|V(H)|$. Indeed, the term $i_k(H)(-\alpha)^k(1-\alpha)^{v-k}$ captures the cases where an independent set $I$ of order $k$ is embedded into $[0,\alpha]$ and the other vertices are mapped into $(\alpha,1]$, where the $m=\sum_{v\in I}\deg_H(v)$ crossing edges contribute the weight $(-1)^m = (-1)^k$. This last identity is where we use the fact that all of the vertices have odd degree.
    Now consider the simple root $\beta\in(0,1)$ of $I_H(x)$ given by~\Cref{thm:indpolroot}. By taking $\alpha/(1-\alpha)$ sufficiently close to $\beta$, one can make $I_H\left(\frac{\alpha}{1-\alpha}\right)$ negative.
    Therefore, $H$ is not positive.
\end{proof}

\section{Non-Sidorenko hypergraphs} \label{sec:nonSid}

In this section, we prove our results about non-Sidorenko hypergraphs. However, before getting to the proofs themselves, we need to say a little about quasirandom sequences of hypergraphs and establish a weak counting lemma relative to such sequences.

\subsection{Quasirandom sequences of hypergraphs}

A sequence of graphs $(G_n)$ with $|V(G_n)| \rightarrow \infty$ is \emph{quasirandom}, in the sense of Chung, Graham and Wilson~\cite{CGW89}, if $p=\lim_{n\rightarrow\infty}t_{K_2}(G_n)$ exists and the kernel $U_n:=G_n-p$ satisfies
\begin{align*}
    \int U_n(x,y)g(x)h(y) \rightarrow 0 
\end{align*}
as $n$ tends to infinity for any bounded measurable functions $g, h : [0,1]\rightarrow \mathbb{R}$. In the language of graph limits, this is equivalent to saying that $U_n$ converges to the uniform graphon $p$. In a sense, the whole point of graph limits is that one can work with the limit object rather than with the converging sequence, so one might now expect us to discard the converging sequence. However, for technical reasons which we will say more about below, it will be better to work with the sequential definition when we generalise the notion of quasirandomness to hypergraphs. 

For $r$-graphs with $r\geq 3$, there are many different notions of quasirandomness, each of which is identified with a family $\Q$ of non-empty proper subsets of $[r]$. 
A sequence of $r$-graphs $(\G_n)$ is \emph{$\Q$-quasirandom} if  $p=\lim_{n\rightarrow\infty}t_{\K_r}(\G_n)$ exists and the $r$-kernel $U_n:=\G_n-p$ satisfies
\begin{align}\label{eq:cutnorm}
    \int U_n(x_1,\ldots,x_r)\prod_{Q\in\Q}g_Q(x_Q) \rightarrow 0
\end{align}
as $n\rightarrow \infty$ for any bounded measurable functions $g_Q:[0,1]^Q\rightarrow \mathbb{R}$.
For brevity, we say that a sequence of $r$-kernels $(U_n)$ is \emph{balanced} if their averages tend to zero, i.e., $t_{\K_r}(U_n) \rightarrow 0$ as $n$ tends to infinity. Generalising the definition above, we say that a balanced sequence  $(U_n)$ of $r$-kernels is \emph{$\Q$-quasirandom} if~\eqref{eq:cutnorm} holds for any bounded measurable functions $g_Q:[0,1]^Q\rightarrow \mathbb{R}$. 

We may now remark that the reason we use converging sequences of $r$-kernels $(U_n)$ in our definitions rather than single $r$-kernels is because the space of $r$-kernels is \emph{not} sequentially compact. To compactify the space, one should instead use an appropriately symmetric space of bounded measurable functions with $2^r -2$ variables (see, for example,~\cite{Z15} for more details). However, we make use of $r$-kernels because they are both simpler to use and sufficient for our purposes.

Each different notion of quasirandomness admits a corresponding counting lemma to some extent. For instance (see~\cite{KNRS10}), if $\Q$ consists of all the singleton sets, then every linear $r$-graph $\HH$ has the random-like count $t_{\HH}(\G_n) = (1\pm o(1))t_{\K_r}(\G_n)^{e(\HH)}$ in every $\Q$-quasirandom graph sequence $(\G_n)$. 

What we will need for our results is a weaker condition than saying that there is a random-like count for some particular class of hypergraphs. 
Moreover, we will need that the class of hypergraphs for which our counting result holds is somewhat broader than the class for which the usual counting lemma holds. To say more, we need some terminology. Given a family $\Q$ of non-empty proper subsets of $[r]$, the \emph{closure} of $\Q$ is the family of non-empty subsets $\FF:=\{F\subseteq [r]:\emptyset\neq F\subseteq Q\text{ for some }Q\in\Q\}$. 
We then say that an $r$-graph $\HH$ is \emph{$\Q$-vanishing} if there exists an edge $e^*$ such that the other edges $e\neq e^*$ intersect $e^*$ in such a way that there exists an injective homomorphism from the hypergraph $\{e^*\cap e:e\in E(\HH)\}$ on $e^*$ to the closure of $\Q$ on $[r]$. Our counting lemma is now as follows. 

\begin{lemma}\label{lem:counting}
    If $\HH$ is a $\Q$-vanishing $r$-graph, then $\lim_{n\rightarrow\infty}t_\HH(U_n)=0$ for any $\Q$-quasirandom balanced sequence of kernels $(U_n)$.
\end{lemma}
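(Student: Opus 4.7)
The plan is to isolate the witness edge $e^*$ provided by the $\Q$-vanishing condition, integrate out the remaining variables to produce a function $G_n(x_{e^*})$ that essentially factors as $\prod_{Q \in \Q} g_Q^n(x_Q)$, and then invoke the defining property of $\Q$-quasirandomness. After relabeling the vertices of $e^*$ via the given injective homomorphism, we may assume $e^* = [r]$ and fix, for each edge $e \neq e^*$, some $Q_e \in \Q$ with $e^* \cap e \subseteq Q_e$. Writing $W := V(\HH) \setminus e^*$, one has
\[ t_\HH(U_n) = \int_{[0,1]^{e^*}} U_n(x_{e^*}) \, G_n(x_{e^*}) \, dx_{e^*}, \qquad G_n(x_{e^*}) := \int_{[0,1]^W} \prod_{e \neq e^*} U_n(x_e) \, dx_W. \]
Each $U_n$ is bounded, and $G_n$ depends on $x_{e^*}$ only through coordinates in $\bigcup_{e \neq e^*}(e^* \cap e) \subseteq \bigcup_{e \neq e^*} Q_e$, so the remaining task is to realise $G_n$ as a bounded product over $\Q$.

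I would first handle the model case in which the outside vertex sets $\{e \setminus e^* : e \neq e^*\}$ are pairwise disjoint. There the inner integration factors edge by edge, giving
\[ G_n(x_{e^*}) = \prod_{e \neq e^*} g_e^n(x_{e \cap e^*}), \qquad g_e^n(x_{e \cap e^*}) := \int_{[0,1]^{e \setminus e^*}} U_n(x_e) \, dx_{e \setminus e^*}, \]
with $\|g_e^n\|_\infty \leq \|U_n\|_\infty$. Because $e \cap e^* \subseteq Q_e$, each $g_e^n$ may be regarded as a bounded function of $x_{Q_e}$, and collecting the edges by their chosen $Q_e$ produces $G_n(x_{e^*}) = \prod_{Q \in \Q} g_Q^n(x_Q)$ (with $g_Q^n \equiv 1$ for $Q$ not appearing as some $Q_e$) with a uniform $L^\infty$ bound. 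Applying $\Q$-quasirandomness in its uniform form, which follows from the pointwise definition by a standard Banach--Alaoglu argument on the weak-$*$ topology of $L^\infty$, then yields $t_\HH(U_n) \to 0$.

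In general, some outside vertex $w \in W$ may lie in several edges other than $e^*$, and the product $\prod_{e \neq e^*} U_n(x_e)$ no longer separates after integration in $x_w$. For this case the plan is to apply Cauchy--Schwarz iteratively in the shared outside vertices to obtain an inequality of the form $|t_\HH(U_n)|^{2^k} \leq C_\HH \cdot t_{\HH'}(U_n)$, where $\HH'$ is a blow-up of $\HH$ in which each previously shared outside vertex has been duplicated into distinct copies, one per incident edge. The blow-up is chosen so that its outside vertex sets are pairwise disjoint and so that $\HH'$ still admits a witness edge for the $\Q$-vanishing condition, at which point the model case applied to $\HH'$ completes the proof. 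The principal technical obstacle is exactly this bookkeeping: verifying that the iterated Cauchy--Schwarz produces such a well-structured blow-up, rather than merely a larger hypergraph in which the relationship between intersections with the new witness edge and elements of $\Q$ has been lost.
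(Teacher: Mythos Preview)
Your detour through iterated Cauchy--Schwarz for the general case is unnecessary, and the bookkeeping obstacle you flag is a real one---the reduction you sketch does not obviously produce a $\Q$-vanishing blow-up. The paper sidesteps this completely by reversing your order of operations: rather than integrating out the outside variables $x_W$ first (which indeed destroys the product structure when the sets $e\setminus e^*$ overlap), one \emph{fixes} $x_W$ and observes that each factor $U_n(x_e)$ with $e\neq e^*$ then depends only on $x_{e\cap e^*}$, hence may be regarded as a function of $x_{Q_e}$. For every fixed $x_W$ the integrand over $x_{e^*}$ is therefore already of the form $U_n(x_{e^*})\prod_{Q\in\Q}g_Q(x_Q)$, with no disjointness assumption required; one integrates over $x_{e^*}$, bounds by the $\Q$-cut norm, and then averages over $x_W$. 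In other words, your ``model case'' is already the general case once the two integrations are taken in the right order.

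A separate caution: your claim that the uniform (cut-norm) form of $\Q$-quasirandomness follows from the pointwise definition by a Banach--Alaoglu argument is incorrect. Take $r=2$, $\Q=\{\{1\},\{2\}\}$, and $U_n(x,y)=\sin(2\pi nx)\sin(2\pi ny)$: Riemann--Lebesgue gives the pointwise condition, yet $t_{K_3}(U_n)=1/8$ for all $n$ even though $K_3$ is $\Q$-vanishing. The compactness argument fails because both $U_n$ and the optimising test functions vary with $n$. In the paper's applications the uniform form is genuinely available through the equivalent characterisations of hypergraph quasirandomness cited there, and the paper's own short proof tacitly relies on it as well; so treat this as a warning about the stated definition rather than a fatal defect in the lemma's intended use, but the deduction you claim does not hold.
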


The following example gives some sense of the difference between this and the usual counting lemma.

\begin{example}
    Let $H$ be the graph obtained by adding a pendant leaf to a $4$-cycle. Then the random-like count $t_H(W)=t_{K_2}(W)^5$ holds if and only if $W$ is quasirandom, i.e., $W$ is constant a.e.
    On the other hand, $H$ is $\Q$-vanishing for $\Q=\{\{1\}\}$, since the leaf edge intersects the other edges on one end only. 
    Hence, in this case, \Cref{lem:counting} is just saying that $\lim_{n \rightarrow \infty} t_H(U_n)=0$ for any balanced sequence of kernels $(U_n)$ with $\int U_n(x,y) g(y) dy \rightarrow 0$ as $n \rightarrow \infty$ for any bounded measurable function $g :[0,1]\rightarrow \mathbb{R}$, which is straightforward to verify.
\end{example}

\begin{proof}[Proof of~\Cref{lem:counting}]
    By relabelling vertices, we may assume that $e^*=[r]$. We may also relabel the vertex set of $\Q$ by using the injective homomorphism $\phi$ from the hypergraph $\{[r]\cap e:e\in E(\HH)\}$ to the closure of $\Q$, so that every $[r]\cap e$, $e\in E(\HH)$, is contained in some $Q\in\Q$.
    
    Fixing all variables other than $x_1,\ldots,x_r$ in $\prod_{i_1i_2\cdots i_r\in E(\HH)}U_n(x_{i_1},\ldots,x_{i_r})$ gives a function of the form
    \begin{align}\label{eq:product}
        U_n(x_1,\ldots,x_r)\prod_{Q\in\Q}g_Q(x_Q),
    \end{align}
    as each $U_n(x_e)$, $e\in E(\HH)$,  is a function of $x_{[r]\cap e}$, which can be seen as a function of $x_{Q}$ for some $Q\in \Q$ containing $[r]\cap e$. Averaging~\eqref{eq:product} over $(x_1,\ldots,x_r)$ then gives the desired conclusion.
\end{proof}

In order to verify the $\Q$-quasirandomness of sequences of $r$-graphs or $r$-kernels, we will make use of the following result. 

\begin{theorem}\label{thm:quasirandom}
    Let $(\G_n)$ be a sequence of $r$-graphs such that $t_{\K_r}(\G_n)$ converges to $p\in [0,1]$ and let $U_n:=\G_n -p$.
    For $\Q\subseteq \mathcal{P}[r] \setminus \{\emptyset, [r]\}$, there exists an $r$-graph $\HH=\HH_\Q$ such that the following are equivalent:
    \begin{enumerate}[(a)]
        \item $t_{\HH}(\G_n)\rightarrow p^{e(\HH)}$ as $n$ tends to infinity;
        \item $t_{\HH}(U_n)\rightarrow 0$ as $n$ tends to infinity;
        \item $(U_n)$ is a $\Q$-quasirandom sequence.
    \end{enumerate}
\end{theorem}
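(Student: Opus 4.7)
The plan is to take $\HH_\Q$ to be the $r$-graph obtained by iterated doubling and then derive the three equivalences from \Cref{lem:counting} together with a Gowers-type iterated Cauchy--Schwarz inequality. \emph{Construction of $\HH_\Q$.} Enumerating $\Q=\{Q_1,\dots,Q_k\}$, I define a sequence $\HH^{(0)},\HH^{(1)},\dots,\HH^{(k)}=\HH_\Q$ by letting $\HH^{(0)}$ be the single-edge hypergraph on $[r]$ and, for each $j\in[k]$, forming $\HH^{(j)}$ from $\HH^{(j-1)}$ by taking two disjoint copies and identifying the vertices whose original labels lie in $Q_j$. A straightforward induction on $j$ shows that any two edges of $\HH_\Q$ intersect in a subset of some $Q_j\in\Q$, namely the one corresponding to the last doubling along which they became separated. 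Consequently $\HH_\Q$---and every non-empty spanning sub-hypergraph $\HH_S$ of it---is $\Q$-vanishing, the required injective homomorphism being the identity on $[r]$.

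\emph{Easy directions.} The $\Q$-vanishing property and \Cref{lem:counting} immediately give \textbf{(c) $\Rightarrow$ (b)}. For \textbf{(c) $\Rightarrow$ (a)}, expand
\begin{align*}
    t_{\HH_\Q}(\G_n)\;=\;\sum_{S\subseteq E(\HH_\Q)}p^{\,e(\HH_\Q)-|S|}\,t_{\HH_S}(U_n),
\end{align*}
where $\HH_S$ denotes the spanning sub-hypergraph with edge set $S$. Since every non-empty $\HH_S$ is also $\Q$-vanishing, \Cref{lem:counting} forces each term with $S\neq\emptyset$ to vanish, leaving $t_{\HH_\Q}(\G_n)\to p^{e(\HH_\Q)}$.

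\emph{(b) $\Rightarrow$ (c).} Given bounded measurable $g_Q\colon[0,1]^Q\to\RR$, I apply the Cauchy--Schwarz inequality once for each $Q\in\Q$, at every step doubling the coordinates outside the current $Q$ and peeling off the associated $g_Q$ factor. After $k$ such applications one arrives at the Gowers-type bound
\begin{align*}
    \left\lvert\int U_n(x_1,\dots,x_r)\prod_{Q\in\Q}g_Q(x_Q)\,d\mu^r\right\rvert^{2^k}\;\leq\;\Big(\prod_{Q\in\Q}\norm{g_Q}_\infty^{2^k}\Big)\cdot t_{\HH_\Q}(U_n),
\end{align*}
from which (b) forces the left-hand side to $0$.

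\emph{(a) $\Rightarrow$ (c) and the main obstacle.} The same chain of Cauchy--Schwarz inequalities, applied now to $\G_n\geq 0$ with all $g_Q\equiv1$, yields the pointwise lower bound $t_{\HH_\Q}(\G_n)\geq t_{\K_r}(\G_n)^{e(\HH_\Q)}$. Under (a), since $t_{\K_r}(\G_n)\to p$, this iterated inequality is saturated in the limit. A quantitative equality-case analysis---tracking the Gram-type defect appearing at each of the $k$ doubling steps and relating it back to the integrals $\int U_n\prod_Q g_Q$---then shows that the $\Q$-box norm of $U_n$ vanishes, which is exactly (c). This last step is the delicate part of the argument: the successive defects must be combined so that the total loss $t_{\HH_\Q}(\G_n)-t_{\K_r}(\G_n)^{e(\HH_\Q)}$ dominates $\big|\int U_n\prod_Q g_Q\big|^{2^k}$ uniformly in bounded test functions $g_Q$, allowing us to read off (c) from (a).
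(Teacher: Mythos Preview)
The paper does not argue this theorem directly: it identifies (a), (b), (c) with the conditions $\mathrm{MIN}_{\Q,p}$, $\mathrm{DEV}_{\Q,p}$, $\mathrm{WDISC}_{\Q,p}$ of \cite{ACHPS18} and cites that reference for their equivalence. Your attempt at a self-contained proof therefore goes well beyond what the paper does, and most of it is sound: the doubling construction of $\HH_\Q$ is the standard one, (c)$\Rightarrow$(b) and (c)$\Rightarrow$(a) follow correctly from the $\Q$-vanishing property of $\HH_\Q$ and all of its non-empty subhypergraphs via \Cref{lem:counting}, and (b)$\Rightarrow$(c) is the usual iterated Cauchy--Schwarz bound of the cut integral by the box norm $t_{\HH_\Q}(U_n)^{1/2^k}$.

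The genuine gap is (a)$\Rightarrow$(c), which you yourself flag as the delicate part but treat only as a plan. Asymptotic saturation of the chain $t_{\HH_\Q}(\G_n)\ge t_{\HH^{(k-1)}}(\G_n)^2\ge\cdots\ge t_{\K_r}(\G_n)^{2^k}$ does force each single-step defect $t_{\HH^{(j)}}(\G_n)-t_{\HH^{(j-1)}}(\G_n)^2$ to vanish, and each of these is the variance of an iterated conditional density of $\G_n$; but converting this collection of marginal regularity statements into control of $\int U_n\prod_Q g_Q$ for \emph{arbitrary} bounded $g_Q$ is a substantial inductive argument that you have not supplied. Already for $\HH_\Q=C_4$ one needs, beyond saturation, to verify that in the expansion $t_{C_4}(\G_n)=p^4+\sum_{S\neq\emptyset}p^{\,4-|S|}t_{\HH_S}(U_n)$ the signed three-edge terms combine with the two-edge terms into a non-negative quantity (using $\G_n\ge 0$), so that $t_{C_4}(\G_n)-p^4\ge t_{C_4}(U_n)$; the general-$\Q$ case is correspondingly more intricate. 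This is precisely the content of the $\mathrm{MIN}\Rightarrow\mathrm{DEV}$ implication in \cite{ACHPS18}, and that is where the real work lies.
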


\begin{proof}
    The conditions (a), (b) and (c) are equivalent to $\mathrm{MIN}_{\Q,p}$, $\mathrm{DEV}_{\Q,p}$ and $\mathrm{WDISC}_{\Q,p}$ in~\cite{ACHPS18}, respectively, all of which are shown to be equivalent there.
\end{proof}

The construction of $\HH_\Q$ given in~\cite{ACHPS18} uses a `doubling' operation, but, following~\cite{CL17}, it can also be described explicitly as follows.
Set $q=|\Q|$ and let $M$ be the $r\times q$ `flipped' incidence matrix between $[r]$ and $\Q$, i.e., $M_{ij} = 0$ if $i\in[r]$ is contained in the $j$th set in $\Q$ and otherwise $M_{ij}=1$. 
For each $i\in [r]$, let 
\begin{align*}
    V_i =\{v\in\{+1,-1,0\}^q: v_j=0 \text{ if }M_{ij}=0 \text{ and }v_j\neq 0\text{ otherwise}\}.
\end{align*}
That is, $V_i$ consists of all vectors obtained by possibly flipping the signs of some of the $1$-entries in the $i$th row vector. Then $\HH_\Q$ is the $r$-partite $r$-graph on the disjoint union of all the $V_i$ where $r$ vectors $v^{(1)},\ldots,v^{(r)}$ with $v^{(i)}\in V_i$ form an edge if there exists $v\in\{+1,-1\}^q$ such that $v^{(i)}$ and $v$ agree on all non-zero coordinates of $v^{(i)}$. Observe that each $V_i$ is of order $2^{q-\deg_\Q(i)}$ and there are $2^{q}$ edges in total, each of which corresponds to a vector in $\{+1,-1\}^q$.

\begin{example}\label{ex:C4}
    Let $\Q=\{\{1,2\},\{3\}\}$. Then $\HH_\Q$ is isomorphic to the $3$-graph with vertex set $\{x_1,x_2,y_1,y_2,z_1,z_2\}$ whose edges are $x_iy_iz_j$, $i,j\in\{1,2\}$. 
    Following Erd\H{o}s~\cite{Erd90}, we may define a sequence of $\Q$-quasirandom $3$-graphs by letting $\G_n$ be the $3$-graph with $n$ vertices whose edges are the directed $3$-cycles of the uniform random tournament with $n$ vertices. Then the edge density of $\G_n$ converges to $1/4$ with high probability. Moreover, one can verify the fact that $(\G_n)$ is $\Q$-quasirandom by using~\Cref{thm:quasirandom}. Indeed, the expected number of copies of $\HH_\Q$ in $\G_n$ is $(1\pm o(1))n^{6}(1/2)^8$, where the factor $(1/2)^8$ comes from the probability that, given an orientation of $x_1y_1$ and $x_2y_2$, the eight pairs $x_iz_j$ and $y_iz_j$, $i,j=1,2$, are directed in the unique way that makes four directed 3-cycles.
    Thus, $t_{\HH_Q}(\G_n)$ converges to $(1/4)^4$, proving that $\G_n$ is $\Q$-quasirandom.
\end{example}

\subsection{Non-Sidorenko hypergraphs from quasirandom perturbations}

One way to try to disprove Sidorenko's conjecture for a given hypergraph $\HH$ is by looking at local perturbations of the uniform graphon $1$. That is, we try to show that $t_\HH(1+\varepsilon W)$ can be smaller than $1$ for some $r$-kernel $W$ with $t_{\K_r}(W)=0$. For graphs, it is known that this approach cannot work, since all bipartite graphs are locally Sidorenko, a result of Lov\'asz~\cite{Lov11}. For hypergraphs, the strategy has been more successful, with many of the counterexamples in~\cite{CLS24} coming from this technique. However, we are still far from a classification of those hypergraphs which are locally Sidorenko, in part because the expanded form
\begin{align}\label{eq:expansion}
    t_\HH(1+\varepsilon W) = 1+\sum_{\emptyset\neq\FF\subseteq\HH} t_\FF(W)\varepsilon^{e(\FF)}
\end{align}
contains exponentially many terms that can potentially interact with one another.
We overcome this difficulty here by 
making use of quasirandom perturbations. 

\begin{proposition}\label{prop:nonSidorenko}
    Let $\HH$ be an $r$-graph with a non-positive subgraph $\G$.
    If there exists a sequence $(U_n)_{n=1}^{\infty}$ of $r$-kernels such that $\lim_{n\rightarrow\infty}t_\FF(U_n)=0$ 
    for all subgraphs $\FF\subseteq\HH$ with $0<e(\FF)\leq e(\G)$ except $\G$ and $\lim_{n\rightarrow\infty}|t_{\G}(U_n)|>0$, then $\HH$ is not Sidorenko.
\end{proposition}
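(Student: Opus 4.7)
The plan is to construct a test $r$-graphon of the form $W_n = 1 + \varepsilon U_n^*$, for small $\varepsilon > 0$, large $n$, and a sequence $(U_n^*)$ derived from $(U_n)$, and to show $t_\HH(W_n) < t_{\K_r}(W_n)^{e(\HH)}$ for some such $W_n$. The expansion
\begin{equation*}
    t_\HH(W_n) = \sum_{S \subseteq E(\HH)} \varepsilon^{|S|} t_{\HH_S}(U_n^*),
\end{equation*}
where $\HH_S$ denotes the sub-$r$-graph with edge set $S$, together with $t_{\K_r}(W_n)^{e(\HH)} = (1 + \varepsilon t_{\K_r}(U_n^*))^{e(\HH)}$, shows that the hypothesis can be used to eliminate every low-order term in $\varepsilon$ except the one coming from $\G$, so that the sign of $\lim_n t_\G(U_n^*)$ alone controls the leading behaviour of $t_\HH(W_n) - t_{\K_r}(W_n)^{e(\HH)}$.

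After passing to a subsequence, I may assume that $t_\G(U_n) \to c$ with $|c| > 0$, and I fix an $r$-kernel $V$ with $t_\G(V) < 0$, which exists because $\G$ is non-positive. The crux is to arrange $\lim_n t_\G(U_n^*) < 0$, but simply negating $U_n$ only flips the sign when $e(\G)$ is odd. I would therefore define $U_n^*$ by a case split: if $c < 0$, set $U_n^* := U_n$; if $c > 0$, set $U_n^* := U_n \otimes V$, the tensor product of $r$-kernels defined via any measure-preserving bijection $[0,1] \to [0,1]^2$. Since $t_\FF(U \otimes V) = t_\FF(U) t_\FF(V)$ for every $r$-graph $\FF$, the modified sequence $(U_n^*)$ still satisfies $t_\FF(U_n^*) \to 0$ for all subgraphs $\FF \subseteq \HH$ with $0 < e(\FF) \leq e(\G)$ that are not isomorphic to $\G$ (in particular $\FF = \K_r$, assuming $e(\G) \geq 2$), while $t_\G(U_n^*)$ converges to a strictly negative number $c^*$.

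Next, I fix $\varepsilon > 0$ small enough that $W_n := 1 + \varepsilon U_n^* \geq 0$ (possible by uniform boundedness of $(U_n^*)$) and let $n \to \infty$. The vanishing above kills every $\varepsilon^k$-contribution with $k < e(\G)$ and every $\varepsilon^{e(\G)}$-contribution with $\HH_S \not\cong \G$, so that the surviving $\varepsilon^{e(\G)}$-terms combine into $N_\G c^* \varepsilon^{e(\G)}$, where $N_\G \geq 1$ counts the number of copies of $\G$ in $\HH$; meanwhile $t_{\K_r}(W_n)^{e(\HH)} \to 1$ because $t_{\K_r}(U_n^*) \to 0$. Since the $\varepsilon^k$-contributions with $k > e(\G)$ remain uniformly bounded in $n$, the limiting difference $t_\HH(W_n) - t_{\K_r}(W_n)^{e(\HH)}$ equals $N_\G c^* \varepsilon^{e(\G)} + O(\varepsilon^{e(\G)+1})$, which is strictly negative for all sufficiently small $\varepsilon$ and therefore negative for $n$ large.

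The main obstacle is controlling the sign of the limit $c$: if $c > 0$ and $e(\G)$ is even, negating $U_n$ does nothing to $t_\G(U_n)$, so one cannot expect the naive perturbation to decrease $t_\HH$. The tensor product construction, which leverages the non-positivity witness $V$ to flip the sign while preserving every required vanishing condition simultaneously, is the decisive idea that makes the argument uniform in the parity of $e(\G)$.
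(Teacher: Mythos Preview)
Your argument is correct and follows essentially the same route as the paper's proof: both split on the sign of $\lim_n t_\G(U_n)$, use $1+\varepsilon U_n$ directly when the limit is negative, and tensor with a non-positivity witness $V$ for $\G$ to flip the sign when the limit is positive, then expand and isolate the $\varepsilon^{e(\G)}$ term. You are in fact slightly more careful than the paper in a few places---passing to a subsequence to guarantee convergence of $t_\G(U_n)$, tracking the multiplicity $N_\G$, and noting that uniform boundedness of $(U_n^*)$ is needed both to make $1+\varepsilon U_n^*$ a graphon and to control the higher-order remainder uniformly in $n$.
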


\begin{proof}
    Let $\ell:=\lim_{n\rightarrow\infty}t_{\G}(U_n)$ and suppose first that $\ell>0$. Let $W$ be an $r$-kernel such that $t_{\G}(W)<0$ and let $W_n:=W\otimes U_n$. That is, $W_n$ is the tensor product of $W$ and $U_n$, whose key property is that $t_\FF(W_n) = t_\FF(W)t_\FF(U_n)$ for all $r$-graphs $\FF$. Then the expansion formula~\eqref{eq:expansion} gives
    \begin{align*}
        t_{\HH}(1+\varepsilon W_n) = 1+\sum_{\emptyset\neq\FF\subseteq\HH} t_\FF(U_n)t_\FF(W)\varepsilon^{e(\FF)}.
    \end{align*}
    The right-hand side converges to $ 1+ \ell\cdot t_\G(W)\varepsilon^{e(\G)}+O(\varepsilon^{e(\G)+1})$ as $n$ tends to infinity, where $\ell=\lim_{n\rightarrow\infty}t_{\G}(U_n)$.
    On the other hand, $t_{\K_r}(1+\varepsilon W_n) = 1+\varepsilon \cdot t_{\K_r}(U_n)t_{\K_r}(W)$ converges to $1$ as $n$ tends to infinity. Therefore, for $\varepsilon$ sufficiently small, there exists some $n_0$ such that $t_\HH(1+\varepsilon W_n)<t_{\K_r}(1+\varepsilon W_n)^{e(\HH)}$ for all $n\geq n_0$.

    If $\ell<0$, then we can simply observe that the limit of $t_\HH(1+\varepsilon U_n)$ is $ 1+ \ell\cdot \varepsilon^{e(\G)}+O(\varepsilon^{e(\G)+1})$, which is smaller than $1$, the limit of $t_{\K_r}(1+\varepsilon U_n)$, for $\varepsilon$ sufficiently small.
\end{proof}

Our first application of~\Cref{prop:nonSidorenko} is the following result. Together with~\cite[Proposition~1.5]{Sid22}, which states that the grid graph $\G_r$ is not positive for odd $r \geq 3$, this implies~\Cref{thm:grid}.

\begin{theorem} \label{thm:2reg}
    If $\HH$ is a connected linear $r$-graph which is $2$-regular and non-positive, then $\HH$ is not Sidorenko.
\end{theorem}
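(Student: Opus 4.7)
The plan is to invoke Proposition~\ref{prop:nonSidorenko} with $\G=\HH$ itself, which is non-positive by hypothesis. My task is then to produce a sequence of $r$-kernels $(U_n)$ with $t_\FF(U_n)\to 0$ for every proper nonempty $\FF\subsetneq\HH$ and $|t_\HH(U_n)|$ bounded below.

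My construction would be a Cayley-type rank-one character product on $\mathbb{Z}_2^n$. Fix a nontrivial character $\chi:\mathbb{Z}_2^n\to\{\pm 1\}$, identify $[0,1]$ with $\mathbb{Z}_2^n$ by an equipartition, and set
\[
U_n(x_1,\ldots,x_r):=\chi(x_1)\chi(x_2)\cdots\chi(x_r).
\]
This is a bounded, symmetric, separable $r$-kernel, so for any subhypergraph $\FF$ Fubini gives
\[
t_\FF(U_n)=\prod_{v\in V(\FF)}\EE\!\left[\chi(x)^{d_v(\FF)}\right]=\prod_{v\in V(\FF)}\mathbf{1}\!\left[d_v(\FF)\text{ even}\right],
\]
using $\chi^2\equiv 1$ and $\EE\chi=0$. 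Since $\HH$ is $2$-regular, $t_\HH(U_n)=1$ for all $n$, so $|t_\HH(U_n)|$ is bounded below by $1$.

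The heart of the argument would be to show that every proper nonempty $\FF\subsetneq\HH$ contains a vertex of odd $\FF$-degree, forcing $t_\FF(U_n)=0$. If every vertex in $V(\FF)$ had even $\FF$-degree, then $2$-regularity of $\HH$ would force $d_v(\FF)\in\{0,2\}$, and $v\in V(\FF)$ requires $d_v(\FF)\ge 1$, so $d_v(\FF)=2$. Both $\HH$-edges incident to any such $v$ must then lie in $\FF$, and propagating this closure along $\HH$'s connectivity would give $\FF=\HH$, contradicting proper containment. Hence Proposition~\ref{prop:nonSidorenko} applies and $\HH$ is not Sidorenko.

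The main insight is that a rank-one character product turns $t_\FF$ into a pure parity indicator on the degree sequence, after which $2$-regularity and connectivity of $\HH$ do all the remaining work to single out $\HH$ among its subhypergraphs. No quasirandomness machinery beyond Proposition~\ref{prop:nonSidorenko} itself is needed, and linearity of $\HH$ in fact plays no role in the argument I have sketched; I expect no genuine obstacle beyond spotting this construction.
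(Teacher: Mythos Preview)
Your proposal is correct and follows essentially the same route as the paper. The paper's kernel is simply $U(x_1,\dots,x_r)=x_1\cdots x_r$ on $\{-1,+1\}^r$, which is exactly your rank-one character product stripped of the unnecessary $\mathbb{Z}_2^n$ scaffolding; in both cases $t_\FF(U)$ becomes the indicator that every vertex of $\FF$ has even degree, and your connectivity/propagation argument (which the paper leaves implicit) pins this down to $\FF=\HH$. Your observation that linearity is never used is also accurate.
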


\begin{proof}
    For convenience in the proof, we will work over the measure space $\{-1,+1\}^r$ with uniform measure instead of over $[0,1]^r$. 
    Let $U(x_1,\ldots,x_r)=x_1x_2\cdots x_r$. Then 
    \begin{align*}
        t_\FF(U) = \EE\left[\prod_{v\in V(\FF)} x_v^{\deg_\FF(v)}\right],
    \end{align*}
    which evaluates to $1$ if all of the degrees are even and $0$ otherwise. Thus, $ t_\FF(U) =0$ for any non-empty proper subgraph $\FF$ of $\HH$, whereas $t_\HH(U)=1$.
    Taking $\G=\HH$ and $U_n=U$ for all $n$  in~\Cref{prop:nonSidorenko} then concludes the proof.
\end{proof}

In what follows, we will apply~\Cref{prop:nonSidorenko} by first picking a family $\Q$ of subsets of $[r]$ for which all proper subgraphs of $\HH$ are $\Q$-vanishing. 
If we then take a $\Q$-quasirandom sequence $(U_n)$ of $r$-kernels, \Cref{lem:counting} implies that $\lim_{n\rightarrow\infty}t_\HH(U_n)=0$ for all proper subgraphs $\FF$ of $\HH$. Therefore, in order to conclude from~\Cref{prop:nonSidorenko} that $\HH$ is not Sidorenko, it only remains to verify that $t_\HH(U_n)$ converges to a non-zero number, for which we can often execute a hands-on calculation.

In the proof of~\Cref{thm:2reg}, the function $U$ is $(r-1)$-codegree-regular, meaning that 
\begin{align*}
    \int U(x_1,\ldots,x_r)g(x_{[r]\setminus i})=0
\end{align*}
for every $i\in [r]$ and every bounded measurable $g:[0,1]^{r-1}\rightarrow\mathbb{R}$, where $x_{[r]\setminus i}$ denotes the 
vector formed from $x$ by removing the $i$th coordinate. 
Therefore, $U_n=U$ is $\Q$-quasirandom for $\Q=\{[r-1]\}$, 
enough to imply that $\FF$ is $\Q$-vanishing for every proper subgraph of $\HH$. However, crucially, $\HH$ is not  itself $\Q$-vanishing, so we could 
apply~\Cref{prop:nonSidorenko} to conclude that $\HH$ is not Sidorenko.
If we had instead taken $\Q=\{\{i\}:i\in [r]\}$, then $\Q$-quasirandom means weak quasirandomness in the sense of~\cite{KNRS10}, which is too strong because then even $\HH$ itself is $\Q$-vanishing. 

Our proof of~\Cref{thm:cycle}, the statement that $\C_{\ell}^{(r)}$ is not Sidorenko for any odd $r \geq 3$, makes use of sequences of  quasirandom $r$-graphs defined using random higher-order tournaments that were introduced by Reiher, R\"odl and Schacht~\cite{RRS18}. 
As a gentle introduction to these ideas, we first give a new proof of the particular case $r=3$ of~\Cref{thm:cycle} proved in~\cite{CLS24}, as it only uses random tournaments rather than their higher-order generalisations. Since $\C_\ell^{(3)}$ is only tripartite if $\ell$ is a multiple of $3$, it will suffice to consider the case where $\ell = 3k$.

\begin{proposition}
    If $k \geq 2$, then the tight cycle $\C_{3k}^{(3)}$ is not Sidorenko.
\end{proposition}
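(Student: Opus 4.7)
The strategy is to apply~\Cref{prop:nonSidorenko} with $\G=\HH=\C_{3k}^{(3)}$. Since every vertex of $\C_{3k}^{(3)}$ has odd degree $3$ and $r=3$ is odd, \Cref{cor:odd} gives that $\C_{3k}^{(3)}$ is non-positive, so there is a kernel $W$ with $t_{\C_{3k}^{(3)}}(W)<0$. What remains is to produce a sequence $(U_n)$ of $3$-kernels satisfying $t_\FF(U_n)\to 0$ for every non-empty proper subgraph $\FF\subseteq\C_{3k}^{(3)}$ and $\lim_{n\to\infty}t_{\C_{3k}^{(3)}}(U_n)\neq 0$. The natural candidate, following~\Cref{ex:C4}, is $U_n=\G_n-\tfrac14$, where $\G_n$ is the (symmetric) indicator $3$-graph of directed triangles in a uniformly random tournament on $n$ vertices, so that $(\G_n)$ is $\Q$-quasirandom with high probability for $\Q=\{\{1,2\},\{3\}\}$.

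I first verify the combinatorial hypothesis of~\Cref{lem:counting}: every non-empty proper subgraph $\FF\subseteq\C_{3k}^{(3)}$ is $\Q$-vanishing. Since the set of missing edges is a non-empty proper subset of the edge-cycle of $\C_{3k}^{(3)}$, some $e^*\in\FF$ has a neighbour $e_0\notin\FF$ in the tight cycle sharing two vertices with it. Writing $e^*=\{v_i,v_{i+1},v_{i+2}\}$ and (without loss of generality) $e_0=\{v_{i-1},v_i,v_{i+1}\}$, the intersections of the other edges of $\FF$ with $e^*$ form a sub-hypergraph of $\{\{v_i\},\{v_{i+1},v_{i+2}\},\{v_{i+2}\}\}$, which embeds into the closure $\{\{1\},\{2\},\{3\},\{1,2\}\}$ of $\Q$ via $v_i\mapsto 3,\ v_{i+1}\mapsto 1,\ v_{i+2}\mapsto 2$. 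By contrast, $\C_{3k}^{(3)}$ itself is not $\Q$-vanishing, since any edge is flanked by two distinct $2$-element intersections while the closure of $\Q$ contains only one $2$-set. Hence~\Cref{lem:counting} kills $t_\FF(U_n)$ for every proper $\FF$.

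The remaining work is to show $t_{\C_{3k}^{(3)}}(U_n)$ does not vanish. Denoting the tournament orientations by $\chi_{xy}\in\{\pm 1\}$, a short Fourier computation yields
\[
U_n(x,y,z)\;=\;\tfrac14\bigl(\chi_{xy}\chi_{yz}-\chi_{xy}\chi_{xz}-\chi_{yz}\chi_{xz}\bigr).
\]
Expanding $\prod_{i=1}^{3k}U_n(x_i,x_{i+1},x_{i+2})$ produces $3^{3k}$ monomials in the variables $\chi_e$, and $\EE_T$ kills every monomial in which some $\chi_e$ appears to an odd power. In the $i$th factor, the three terms use, respectively, the two cycle edges $\{x_i,x_{i+1}\},\{x_{i+1},x_{i+2}\}$ of triangle $i$, or one cycle edge together with the distance-two edge $\{x_i,x_{i+2}\}$. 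Since every distance-two edge lies in only one triangle (using $k\ge 2$), any choice involving such an edge forces an odd multiplicity; the unique surviving monomial therefore picks the cycle--cycle term in every triangle, so each cycle edge appears exactly twice and each distance-two edge not at all. This yields $\EE_T\,t_{\C_{3k}^{(3)}}(U_n)\to(1/4)^{3k}$, which a standard second-moment argument upgrades to $t_{\C_{3k}^{(3)}}(U_n)\to(1/4)^{3k}>0$ with high probability. Applying~\Cref{prop:nonSidorenko} with $\ell=(1/4)^{3k}>0$ then completes the proof. The main obstacle is the parity analysis that singles out a single surviving monomial, together with the routine but necessary concentration upgrade.
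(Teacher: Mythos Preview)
Your proof is correct and follows the same architecture as the paper's: take $U_n=\G_n-\tfrac14$ with $\G_n$ the directed-triangle $3$-graph of a uniform random tournament, verify that every non-empty proper subgraph of $\C_{3k}^{(3)}$ is $\Q$-vanishing for $\Q=\{\{1,2\},\{3\}\}$ so that \Cref{lem:counting} kills the corresponding densities, check that $t_{\C_{3k}^{(3)}}(U_n)\not\to 0$, and conclude via \Cref{prop:nonSidorenko} together with the non-positivity from \Cref{cor:odd}.

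The only substantive difference is in how the limit of $t_{\C_{3k}^{(3)}}(U_n)$ is computed. The paper argues combinatorially with $\G_n$: for an injective map $\phi\colon[3k]\to[n]$, once the orientation of $(\phi(1),\phi(2))$ is fixed, the orientations of all other tournament edges along the cycle are forced, so $t_{\C_{3k}^{(3)}}(\G_n)\to 2(1/4)^{3k}$, and then the expansion $t_{\C_{3k}^{(3)}}(\G_n)=\sum_{\FF}t_\FF(U_n)(1/4)^{3k-e(\FF)}$ yields $t_{\C_{3k}^{(3)}}(U_n)\to(1/4)^{3k}$. You instead work with $U_n$ directly via the identity $U_n(x,y,z)=\tfrac14(\chi_{xy}\chi_{yz}-\chi_{xy}\chi_{xz}-\chi_{yz}\chi_{xz})$ and expand the product over the $3k$ triangles into monomials in the independent $\pm1$ variables $\chi_e$; the parity observation that each distance-two edge $\{x_i,x_{i+2}\}$ occurs in a single factor (valid for $k\ge 2$) singles out the unique surviving monomial and gives the same value $(1/4)^{3k}$ without the detour through $\G_n$. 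Both computations are short; yours is a clean Fourier-style variant of the paper's combinatorial count.
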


\begin{proof}
    We will use the $3$-graph sequence $(\G_n)$ described in \Cref{ex:C4}, which is $\Q$-quasirandom with $\Q=\{\{1,2\},\{3\}\}$. 
    Suppose that the vertices of $\C_{3k}^{(3)}$ have been labelled by the elements of $[3k]$ in such a way that $\{i,i+1,i+2\}$ is an edge for all $i \in [3k]$, where  addition is taken modulo $3k$. 
    Given a non-empty proper subgraph $\FF$ of $\C_{3k}^{(3)}$, let $\{i,i+1,i+2\}$ be a missing edge such that $e=\{i-1,i,i+1\}\in E(\FF)$.
    Then all other edges intersect $e$ on a subset of either $\{i-1,i\}$ or $\{i+1\}$ and, hence, $\FF$ is $\Q$-vanishing.  By~\Cref{lem:counting}, if we set $U_n = \G_n - 1/4$, it follows that $\lim_{n\rightarrow\infty}t_\FF(U_n) = 0$. 

    Setting $\HH=\C_{3k}^{(3)}$, we claim that $\lim_{n\rightarrow\infty}t_\HH(U_n)$ is non-zero. As
    \begin{align*}
        t_\HH(\G_n) = t_\HH(U_n+1/4)  =\sum_{\FF\subseteq\HH}t_\FF(U_n)(1/4)^{3k-e(\FF)} \rightarrow \lim_{n \rightarrow \infty} t_\HH(U_n) + (1/4)^{3k},
    \end{align*}
    it will suffice to prove that $t_\HH(\G_n)$ converges to a number distinct from $(1/4)^{3k}$. 
    
    Suppose that 
    $\phi:[3k]\rightarrow [n]$ is an injective homomorphism from $\C_{3k}^{(3)}$ to $\G_n$. 
    In the random tournament that produces $\G_n$, if $(\phi(1),\phi(2))$ is a directed edge, then every $(\phi(i),\phi(i+1))$ and $(\phi(i+2),\phi(i))$ must also be directed edges for all $i\in [3k]$. This event occurs with probability $(1/2)^{6k}$.
    Adding in the other case where $(\phi(2),\phi(1))$ is a directed edge in the tournament and using standard concentration inequalities, we see that $t_\HH(\G_n)=(2\pm o(1))(1/4)^{3k}$ with high probability. Thus, $t_\HH(\G_n)$ converges to $2(1/4)^{3k}$ rather than $(1/4)^{3k}$.

    Since $\C_{3k}^{(3)}$ is non-positive by~\Cref{cor:odd}, an application of~\Cref{prop:nonSidorenko} with $U_n$ and $\G=\HH$ completes the proof.
\end{proof}

Following~\cite{RRS18}, we define an \emph{$(r-1)$-uniform tournament} (or \emph{$(r-1)$-tournament}) to be the complete $(r-1)$-graph $\K_n^{(r-1)}$ together with an orientation $\sigma(T)$ of each edge $T\in\binom{[n]}{r-1}$. Informally, an orientation is just an assignment of $\pm 1$ to each edge. More formally, we map the set of all even permutations of $T$ (with the elements of $T$ written in increasing order corresponding to the identity permutation) to either $+1$ or $-1$ and the odd permutations of $T$ to the opposite number.

We may also assign $\pm 1$-values to each subset $S = T\setminus\{v\}$ of order $r-2$ in $T$ by considering the permutation of $T$ that orders the elements of $S$ in the same order as $[n]$ and places $v$ at the end and assigning $S$ the same $\pm 1$-value as this permutation. 
We call this the \emph{$T$-sign} of the $(r-2)$-subset $S$ and denote it by $\sigma_T$. For example, if $r=3$, the $T$-sign of a vertex $v$ indicates whether the edge $T$ is directed towards $v$ or away from it.

Given a set $R\subseteq [n]$ of order $r$ and $S\in \binom{R}{r-2}$, the \emph{$R$-weight} of $S$ is given by the sum of the $T$-signs of $S$ over all $T\in \binom{R}{r-1}$ containing~$S$. As there are only two choices for such a $T\in \binom{R}{r-1}$, the weight is either $+2$, $-2$ or $0$.
When $r=3$, the $R$-weight of a vertex counts the outdegree minus the indegree of a vertex in the given orientation of the edges of the triangle induced on $R$.

Let $\G(\TT_n)$ be the $r$-graph whose edges are those $r$-subsets $R$ of $[n]$ such that the $R$-weight of each $(r-2)$-subset $S\subseteq R$ is zero. Note that this clearly generalises the construction using directed $3$-cycles that we employed in the $r=3$ case. 
We now prove some facts about $\G(\TT_n)$ if $\TT_n$ is the uniform random $(r-1)$-tournament where the orientation of each edge is chosen uniformly at random.

\begin{lemma}\label{lem:expectation}
    If $\TT_n$ is the uniform random $(r-1)$-tournament, then $\G(\TT_n)$ has the following properties:
    \begin{enumerate}[(i)]
        \item $\EE_{\TT_n}[t_{\K_r}(\G(\TT_n))]=2^{1-r} + o(1)$;
        \item if $\HH$ is an $r$-graph for which there is an ordering $e_1, e_2, \dots, e_m$ of the edges such that $\{e_i\cap e_j: j < i\}$ contains at most one set of order $r-1$ for each $e_i$, then $\EE_{\TT_n}[t_{\HH}(\G(\TT_n))]=2^{(1-r)e(\HH)} + o(1)$. 
    \end{enumerate}
\end{lemma}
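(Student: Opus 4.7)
The plan is to reduce both parts to a single structural observation: for any fixed $r$-subset $R\subseteq[n]$, the event $\{R\in E(\G(\TT_n))\}$ depends only on the orientations of the $r$ many $(r-1)$-subsets of $R$, and exactly two of the $2^r$ possible orientation patterns satisfy the zero-weight condition; moreover, these two patterns differ by a global sign flip, so projection onto any single $(r-1)$-subset's orientation realises each of $\{+1,-1\}$ exactly once. To establish this, I would write $R=\{v_1<v_2<\dots<v_r\}$ and set $T_k=R\setminus\{v_k\}$. A short parity computation (moving the deleted vertex to the end requires $r-1-j$ transpositions when it sits at position $j$) gives that the $T_i$-sign of $R\setminus\{v_i,v_k\}$ (for $i<k$) equals $(-1)^{r-k}\sigma(T_i)$, while the $T_k$-sign of the same set equals $(-1)^{r-1-i}\sigma(T_k)$. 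Vanishing of the $R$-weight of each $(r-2)$-subset then becomes the relation $\sigma(T_i)=(-1)^{k-i}\sigma(T_k)$; setting $\alpha_k:=(-1)^k\sigma(T_k)$ this reads $\alpha_i=\alpha_k$, so the solution set is parameterised by a single global sign, yielding exactly two orientation patterns.

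Given this, part (i) follows immediately: for each fixed $r$-subset $R$ of $[n]$ we have $\PP[R\in E(\G(\TT_n))]=2/2^r=2^{1-r}$, and linearity of expectation together with $r!\binom{n}{r}/n^r=1-O(1/n)$ yields $\EE_{\TT_n}[t_{\K_r}(\G(\TT_n))]=2^{1-r}+o(1)$.

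For part (ii), I will process the edges in the given order $e_1,\dots,e_m$ and compute, for each injective map $\phi\colon V(\HH)\to[n]$, the probability that $\phi(e_i)\in E(\G(\TT_n))$ for all $i$ by revealing orientations one edge at a time. Since the orientations of distinct $(r-1)$-subsets are independent, conditioning on the orientations of $\phi(e_1),\dots,\phi(e_{i-1})$ fixes precisely those $(r-1)$-subsets of $\phi(e_i)$ that are already $(r-1)$-subsets of some earlier $\phi(e_j)$. By the hypothesis on the ordering together with injectivity of $\phi$, at most one such $(r-1)$-subset exists. If none, the event $\{\phi(e_i)\in E(\G(\TT_n))\}$ is independent of the conditioning and has probability $2^{1-r}$ by (i); if exactly one, the structural observation shows that each of the two valid orientation patterns on $\phi(e_i)$ realises each value of the fixed $(r-1)$-subset's sign exactly once, so the conditional probability is again $1/2^{r-1}=2^{1-r}$. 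Multiplying across all $m$ edges yields $\PP[\forall i\colon \phi(e_i)\in E(\G(\TT_n))]=2^{(1-r)e(\HH)}$, and summing over injective $\phi$'s, with non-injective maps contributing $O(1/n)$ to $t_\HH$, gives (ii).

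The main obstacle is the parity bookkeeping behind the structural observation. Tracking the permutation parities as a function of the position of the deleted vertex, and then verifying that the resulting $\binom{r}{2}$ relations on the complete graph $K_r$ form a consistent cocycle rather than being over-determined by the cycles of $K_r$, is the delicate part of the argument; once this is done, the rest is a clean application of the chain rule for conditional probability and a standard count of non-injective homomorphisms.
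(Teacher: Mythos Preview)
Your proposal is correct and follows essentially the same approach as the paper: the key structural observation---that the zero-weight constraints on an $r$-set $R$ admit exactly two orientation patterns, related by a global sign flip---is precisely what the paper establishes, and both part (i) and the edge-by-edge conditioning in part (ii) proceed identically from there. Your explicit parity computation leading to $\sigma(T_i)=(-1)^{k-i}\sigma(T_k)$ and the substitution $\alpha_k=(-1)^k\sigma(T_k)$ is a slightly more detailed derivation of what the paper handles by arguing uniqueness abstractly and then exhibiting the explicit solution $\sigma(1\,2\,\cdots\,\hat{i}\,\cdots\,r)=(-1)^{r-i}$, but the content is the same.
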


\begin{proof}
    Once the orientation of one $(r-1)$-subset $T\subseteq R$ is fixed, there is a unique way to give orientations to the other $(r-1)$-subsets so that $R\in E(\G(\TT_n))$. Indeed, every $(r-1)$-subset of $R$ is obtained by exchanging one element $x$ of $T$ with the unique element $y\in R\setminus T$. 
    Since the $(r-2)$-subset $R\setminus\{x,y\}$ already has a fixed $T$-sign, $T'=T\setminus\{x\}\cup\{y\}$ must assign the opposite $T'$-sign to $R\setminus\{x,y\}$, so the extension, if it exists, must be unique.
    The existence of an extension follows from the simple fact that there is an orientation of all $T\in\binom{R}{r-1}$ such that the $R$-weight of each $S\in\binom{R}{r-2}$ vanishes. Indeed, if $R = [r]$, one may take the orientation where $\sigma(1 \, 2 \, \dots \, \hat{i} \, \dots \, r) = (-1)^{r-i}$ or its negative.

    The $(r-1)$-uniform tournament decides the orientation of each $(r-1)$-subset independently at random, so, once an orientation of any particular $(r-1)$-set $T$ is fixed, the probability that an $r$-set $R\supseteq T$ becomes an edge is $2^{1-r}$, since we need that each of the $r-1$ other $(r-1)$-subsets of $R$ receives the `correct' orientation. Therefore, the expectation of $t_{\K_r}(\G(\TT_n))$ is $2^{1-r} + o(1)$, where the $o(1)$-term accounts for degeneracies.

    To prove (ii), it is enough to fix a set of order $|V(\HH)|$ in $[n]$ and show that the probability it spans a labelled copy of $\HH$ is $2^{(1-r)e(\HH)}$.
    We will use induction on $e(\HH)$ to show this, noting that the base case follows from (i). 
    Consider $\HH':=\HH\setminus e_m$. By induction, the probability that $\HH'$ appears on $V(\HH)$ is $2^{(1-r)(e(\HH)-1)}$. 
    If $e_m$ does not intersect any other edges on a set of order $r-1$, then, as above, the probability that $e_m$ is an edge of $\G(\TT_n)$ is $2^{1-r}$ independently of all other edges, completing the induction in this case. 
    If instead $e_m$ has an intersection $e_m\cap e_j$ of order $r-1$ with some other edge $e_j$, then only the orientation of this intersection is fixed, while the orientation of the remaining $(r-1)$-subsets of $e_m$ is yet to be determined. Therefore, the probability that $e_m$ is an edge of $\G(\TT_n)$ is again 
    $2^{1-r}$ independently of all other edges, completing the induction and the proof.
\end{proof}

In order to carry out our scheme, we need to verify that the sequence of $r$-graphs $(\G(\TT_n))$ arising from a sequence $(\TT_n)$ of uniform random $(r-1)$-tournaments is  $\Q$-quasirandom with high probability for some appropriate choice of $\Q$.

\begin{corollary}\label{cor:q_quasi}
    If $(\TT_n)$ is a sequence of uniform random $(r-1)$-tournaments with $|V(\TT_n)| = n$, then, with high probability, $(\G(\TT_n))$ is $\Q$-quasirandom for $\Q=\{[r-1]\}\cup \binom{[r]}{r-2}$.
\end{corollary}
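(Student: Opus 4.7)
The strategy is to apply Theorem~\ref{thm:quasirandom} to reduce $\Q$-quasirandomness to the single counting condition $t_{\HH_\Q}(\G(\TT_n)) \to p^{e(\HH_\Q)}$, where $p = 2^{1-r}$ (the limit of $t_{\K_r}(\G(\TT_n))$ by Lemma~\ref{lem:expectation}(i)) and $\HH_\Q$ is the explicit $r$-partite $r$-graph on $\bigsqcup_{i=1}^{r} V_i$ with $q = 1 + \binom{r}{2}$ sign-vector coordinates and $2^q$ edges described after the statement of Theorem~\ref{thm:quasirandom}. It then suffices to verify that, with high probability over $\TT_n$, one has $t_{\HH_\Q}(\G(\TT_n)) \to p^{e(\HH_\Q)}$.

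The key step is to check that this $\HH_\Q$ satisfies the ordering hypothesis of Lemma~\ref{lem:expectation}(ii). Two distinct edges of $\HH_\Q$, corresponding to distinct sign vectors $v,v' \in \{+1,-1\}^q$, share $r-1$ vertices if and only if the set $J$ of coordinates on which $v$ and $v'$ disagree meets the $1$-entries of exactly one row of the incidence matrix $M$. For the specific family $\Q = \{[r-1]\} \cup \binom{[r]}{r-2}$, the $[r-1]$-column of $M$ is a $1$-entry of only the $r$th row, whereas each pair-column $[r]\setminus\{a,b\}$ is a $1$-entry of exactly rows $a$ and $b$. A short case analysis forces $J = \{[r-1]\}$, so each edge of $\HH_\Q$ has a unique ``partner'' sharing $r-1$ vertices with it, and therefore any ordering of $E(\HH_\Q)$ satisfies the hypothesis. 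Lemma~\ref{lem:expectation}(ii) then yields
\[
    \EE_{\TT_n}\bigl[t_{\HH_\Q}(\G(\TT_n))\bigr] = p^{e(\HH_\Q)} + o(1).
\]

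To upgrade this expectation statement to convergence in probability, I would run a second-moment argument. The quantity $t_{\HH_\Q}(\G(\TT_n))$ is an average over maps $V(\HH_\Q) \to [n]$, and the indicator of each such map being a homomorphism depends only on the orientations $\sigma(T)$ of the $(r-1)$-subsets $T$ contained in its image. Hence two maps with disjoint images have independent indicators and contribute nothing to the variance, while the $O(n^{2|V(\HH_\Q)|-1})$ pairs with at least one shared vertex contribute only $O(1/n)$ after normalisation. Thus $\mathrm{Var}(t_{\HH_\Q}(\G(\TT_n))) = O(1/n)$, Chebyshev gives convergence in probability (and, along a subsequence, almost surely), and Theorem~\ref{thm:quasirandom} closes the argument.

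The main obstacle is the combinatorial check in the second step. Although the ultimate conclusion that only the $[r-1]$-coordinate may be flipped while still preserving $r-1$ shared vertices is clean, it relies on the asymmetry of the incidence matrix between the singleton $\{[r-1]\} \in \Q$ (whose column has row-support of size one) and the pair-complements in $\binom{[r]}{r-2}$ (whose columns all have row-support of size two); everything else in the proof is routine.
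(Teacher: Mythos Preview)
Your proposal is correct and follows essentially the same route as the paper: reduce to counting $\HH_\Q$ via Theorem~\ref{thm:quasirandom}, verify the hypothesis of Lemma~\ref{lem:expectation}(ii) by showing that two edges of $\HH_\Q$ share $r-1$ vertices only when their sign vectors differ in the single $[r-1]$-coordinate, and finish with concentration. Your formulation of the combinatorial check in terms of the $1$-entries of $M$ is equivalent to the paper's phrasing in terms of the zero entries of the $v^{(i)}$, and your second-moment argument simply spells out what the paper calls ``standard concentration inequalities''.
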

\begin{proof}
    Let $\HH=\HH_Q$ be the hypergraph given by~\Cref{thm:quasirandom}. Enumerate $\Q$ as $\{Q_1,\dots,Q_q\}$. Then, as described below~\Cref{thm:quasirandom}, $\HH$ can be taken to be the $r$-partite graph with $r$-partition $V_1\cup\cdots\cup V_r$, where
    \begin{align*}
        V_i =\{v\in\{+1,-1,0\}^{q}: v_j=0 \text{ if }i\in Q_j \text{ and }v_j\neq 0\text{ otherwise}\}
    \end{align*}
    and each edge $e\in E(\HH)$ consists of $r$ vectors $v^{(1)},\ldots,v^{(r)}$, $v^{(i)}\in V_i$, that agree on all non-zero coordinates.
    In particular, simultaneously flipping the sign of the same set of coordinates in each of $v^{(1)},\ldots,v^{(r)}$  
    maps an edge $e$ to another edge $e'$. If two edges $e$ and $e'$ share $r-1$ elements, then this means that flipping the sign of some coordinate $j$ leaves $r-1$ of the vectors unchanged. Thus, the corresponding entries $v^{(i_1)}_j,\ldots,v^{(i_{r-1})}_j$ must all be zero.
    But this can only happen for the coordinate which corresponds to $[r-1]$. Hence, $\HH$ satisfies the condition of part (ii) of~\Cref{lem:expectation}, so $\EE_{\TT_n}[t_{\HH}(\G(\TT_n))]=2^{(1-r)e(\HH)} + o(1)$. By standard concentration inequalities, $(t_{\HH}(\G(\TT_n)))$ converges to $2^{(1-r)e(\HH)}$ with high probability, so~\Cref{thm:quasirandom} shows that $(\G(\TT_n))$ is $\Q$-quasirandom.
\end{proof}

We are finally in a position to prove~\Cref{thm:cycle}, which we again recall states that $\C_{\ell}^{(r)}$ is not Sidorenko for any odd $r \geq 3$ and $\ell > r$. Note that $\C_{\ell}^{(r)}$ is only $r$-partite if $\ell$ is a multiple of $r$, so we can assume without loss of generality that we are in that case.

\begin{proof}[Proof of~\Cref{thm:cycle}]
    Let $(\G_n):=(\G(\TT_n))$ be the $\Q$-quasirandom sequence of $r$-graphs given by~\Cref{cor:q_quasi} with $\Q=\{[r-1]\}\cup \binom{[r]}{r-2}$. 
    To check that any proper non-empty subgraph of $\C_{rk}^{(r)}$ is $\Q$-vanishing, it is enough to look at an edge $e\in E(\C_{rk}^{(r)})$ that intersects a missing edge on a subset of order $r-1$. But the hypergraph on $e$ consisting of the possible intersections $e\cap e'$ of $e$ with other edges $e'$ in the subgraph is, after possibly relabelling, contained in the closure of $\Q$, confirming that the subgraph is $\Q$-vanishing.
    
    Write $\C=\C_{rk}^{(r)}$ and $p=2^{1-r}$ for brevity. 
    Since~\Cref{cor:odd} implies that $\C$ is not positive, in order to
    apply~\Cref{prop:nonSidorenko}, it only remains to prove that $t_{\C}(U_n)$ does not converge to zero for $U_n=\G_n - p$. Since
    \begin{align*}
        t_{\C}(\G_n) - p^{e(\C)} = t_{\C}(U_n +p)-p^{e(\C)}=  \sum_{\emptyset\neq \FF\subseteq \C} t_\FF(U_n)p^{e(\C)-e(\FF)}
    \end{align*}
    and every term in the sum except $t_{\C}(U_n)$ converges to zero,
    $t_{\C}(U_n)\rightarrow 0$ if and only if $t_{\C}(\G_n) \rightarrow p^{e(\C)}$.

    It therefore suffices to prove that $t_{\C}(\G_n)$ does not converge to $p^{e(\C)}$. 
    To this end, setting $\ell:=rk$ for brevity, let us 
    compute the probability that there is a copy of $\C$ on $[\ell]$ with $\{i,i+1,\ldots,i+r-1\}$ an edge for every $i\in [\ell]$,  where addition is taken modulo $\ell$. 
    In particular, $e=[r]$ is one of the edges of our possible copy of $\C$.
    Suppose that we have exposed all of the $(r-1)$-subsets that are contained in some other edge $e'\neq e$ of our possible copy of $\C$. Then the probability that we have created a copy of $\C\setminus e$ is $2^{(1-r)(\ell -1)}$ by part (ii) of~\Cref{lem:expectation}. 
    Note that, at this point, the $(r-1)$-subsets contained in $e$ that have not yet been given an orientation are those containing both $1$ and $r$.

    Let $T_{j,k}:=\{j,j+1,\dots,j+k-1\}$ be the $k$ consecutive integers starting from $j$, where addition is taken modulo $\ell$.
    Recall that $\sigma(T)$ denotes the orientation of an $(r-1)$-set $T$ and $\sigma_T(S)$ the $T$-sign of $S$ of an $(r-2)$-subset $S$ of $T$.
    If $\sigma(T_{1,r-1})=+1$, then $\sigma_{T_{1,r-1}}(T_{1,r-2})=+1$, so, since $\{\ell, 1 \dots, r-1\}$ is an edge of $\G_n$, we must have $\sigma_{T_{\ell, r-2}}(T_{1,r-2})=-1$, which in turn implies that $\sigma(T_{\ell,r-2})=(-1)^{r-1}$. Repeating this procedure, we see that $\sigma(T_{\ell-1,r-3})=(-1)^{2(r-1)}$, $\sigma(T_{\ell-2,r-4})=(-1)^{3(r-1)}$, \dots, $\sigma(T_{2,r+1})=(-1)^{(\ell-1)(r-1)}$. But then $\sigma_{T_{1,r-1}}(T_{2,r-1})=(-1)^{r-2}$ and $\sigma_{T_{2,r}}(T_{2,r-1})=(-1)^{(\ell-1)(r-1)}$ and, using that $\ell = rk$, we see that these have opposite signs, a necessary and sufficient condition for us to be able to orient the remaining $(r-1)$-subets of $e$. Therefore, if we get a consistent orientation for the $r-2$ remaining $(r-1)$-subsets of $e$, which occurs with probability $2^{2-r}$, we get a copy of $\C$. In total, multiplying by $2^{(1-r)(\ell -1)}$, the probability of a labelled copy of $\C$ on vertex set $[\ell]$ is $2^{1+(1-r)\ell}$. By standard concentration inequalities, we see that $t_\C(\G_n) = 2^{1+(1-r)\ell} + o(1)$ with high probability and this is indeed larger than the random bound $p^{e(\C)} = 2^{(1-r)\ell}$.
\end{proof}

The results of this section also extend to grids and tight cycles of even uniformity at least $4$, in the sense that if any of these hypergraphs can be shown to be non-positive, then our arguments imply that they are also non-Sidorenko. If one assumes the positive graph conjecture, then one can check that these hypergraphs are indeed non-positive. We leave it as an open problem to find a proof of non-positivity that does not make use of this considerable assumption.

\section{Graph codes}
Let $\FFF_2^{K_n}$ be the vector space of all graphs on vertex set $[n]$, where the edge set of the sum of two graphs is given by the symmetric difference of the summand edge sets. For a function $f\colon \FFF_2^{K_n}\rightarrow \RR$, its \emph{Fourier transform} $\ft{f}\colon \FFF_2^{K_n} \rightarrow \mathbb{C}$ is defined by 
\begin{align*}
     x\mapsto \frac{1}{\vert \FFF_2^{K_n}\vert} \sum_{y\in \FFF_2^{K_n}} (-1)^{x^Ty} f(y)=\frac{1}{\vert \FFF_2^{K_n}\vert} \sum_{y\in \FFF_2^{K_n}} (-1)^{\sum_{e\in \binom{[n]}{2}} x_ey_e} f(y).
\end{align*}
Recall, from the introduction, that $d_H(n)$ is the maximum proportion of graphs on $[n]$ in an \emph{$H$-code}, a family without two members whose symmetric difference is a copy of $H$. The following result gives a Fourier bound on this graph code density.

\begin{lemma} \label{lem:fourier}
Let $H$ be a graph and, for $n\in \mathbb{N}$, let $B_n$ be the set of copies of $H$ in $\FFF_2^{K_n}$. Then $d_H(n)<-(|\FFF_2^{K_n}|/|B_n|) \min_{y\in \FFF_2^{K_n}}\ft{1_B}(y)$.
\end{lemma}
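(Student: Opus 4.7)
The plan is to apply a Delsarte-style Fourier argument on the abelian group $\FFF_2^{K_n}$. Let $N:=|\FFF_2^{K_n}|$ and fix an $H$-code $\A\subseteq\FFF_2^{K_n}$ of density $|\A|/N=d_H(n)$. Since no two distinct members of $\A$ have symmetric difference belonging to $B_n$ and $0\notin B_n$ (as $H$ has at least one edge), the double sum
\[S:=\sum_{a,a'\in\A}1_{B_n}(a+a')\]
vanishes identically.

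Next, I would rewrite $S$ on the Fourier side. Noting that $(1_\A\ast 1_\A)(x)=\#\{(a,a')\in\A^2:a+a'=x\}$ gives $S=\sum_x (1_\A\ast 1_\A)(x)\,1_{B_n}(x)$. Combining Parseval's identity $\sum_x f(x)g(x)=N\sum_y\widehat f(y)\widehat g(y)$ with the convolution identity $\widehat{f\ast g}=N\,\widehat f\,\widehat g$, both immediate from the normalisation used in the statement, yields
\[0=S=N^2\sum_{y\in\FFF_2^{K_n}}\widehat{1_\A}(y)^2\,\widehat{1_{B_n}}(y).\]
Separating the $y=0$ term, which equals $N^2(|\A|/N)^2(|B_n|/N)=|\A|^2|B_n|/N$, gives
\[\frac{|\A|^2|B_n|}{N}=-N^2\sum_{y\neq 0}\widehat{1_\A}(y)^2\,\widehat{1_{B_n}}(y).\]

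To conclude, set $m:=\min_y\widehat{1_{B_n}}(y)$; the displayed equation forces $m<0$ as soon as $|\A|,|B_n|>0$. Bounding each $\widehat{1_{B_n}}(y)\ge m$ while using $\widehat{1_\A}(y)^2\ge 0$, and applying Parseval once more to evaluate $\sum_{y\neq 0}\widehat{1_\A}(y)^2=|\A|/N-|\A|^2/N^2$, gives
\[\frac{|\A|^2|B_n|}{N}\le -N^2 m\Bigl(\frac{|\A|}{N}-\frac{|\A|^2}{N^2}\Bigr)=-Nm|\A|+m|\A|^2.\]
Rearranging yields $d_H(n)=|\A|/N\le -Nm/(|B_n|-Nm)$; since $-Nm>0$, the denominator strictly exceeds $|B_n|$, sharpening this to the desired $d_H(n)<-(N/|B_n|)\min_y\widehat{1_{B_n}}(y)$. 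The argument presents no real difficulty; the only subtle point worth flagging is that the strict inequality in the statement arises precisely from the $-Nm$ term appearing in the denominator after rearrangement, rather than from dropping $\widehat{1_\A}(0)^2$ from the Parseval sum.
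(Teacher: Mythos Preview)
Your proof is correct and follows essentially the same Delsarte-style Fourier argument as the paper: express the $H$-code condition as $\langle 1_A*1_A,1_B\rangle=0$, pass to the Fourier side, isolate the zero frequency, and bound the remainder using the minimum Fourier coefficient and Parseval. The only cosmetic differences are that you use the unnormalised convolution (the paper uses the averaged one) and that you extract the strict inequality from the extra $-Nm$ in the denominator, whereas the paper obtains it by bounding $\sum_{y\neq 0}\widehat{1_A}(y)^2<\alpha$ strictly after dropping the $y=0$ term; both routes are valid.
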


\begin{proof}
    Let $A\subseteq\FFF_2^{K_n}$ be an $H$-code and let $B:=B_n$ be the set of copies of $H$. 
    We can express the fact that $A+A$ does not intersect $B$ as saying that the inner product $\langle 1_A*1_A,1_B\rangle=\EE_x 1_A*1_A(x)1_B(x)$ is 0, where $f*g$ denotes the convolution
\begin{align*}
f*g(x)=\EE_{y\in \FFF_2^{K_n}} f(y)g(x-y)=\EE_{y\in \FFF_2^{K_n}} f(y)g(x+y).
\end{align*}
Using Parseval's identity and the well known fact that the Fourier transform of $f*g$ is $\ft{f}\cdot\ft{g}$, we deduce that\footnote{Note that the inner product in Fourier space has no normalisation.}
\begin{align}\label{eq:graph-codes-additive-inner-product}
\langle 1_A*1_A,1_B\rangle = \langle \ft{1_A}^2,\ft{1_B}\rangle=\sum_{x\in \FFF_2^{K_n}} \ft{1_A}(x)^2\ft{1_B}(x)\geq \ft{1_A}(0)^2\ft{1_B}(0)+\Big(\min_{y\in \FFF_2^{K_n}} \ft{1_B}(y)\Big)\sum_{\substack{x\in \FFF_2^{K_n}\\x\neq 0}} \ft{1_A}(x)^2.
\end{align}
Now let $\alpha:=\vert A\vert/\vert \FFF_2^{K_n}\vert$, $\beta:=\vert B\vert/\vert \FFF_2^{K_n}\vert$ and $\gamma = \min_{y\in \FFF_2^{K_n}} \ft{1_B}(y)$. We then have $\ft{1_A}(0)^2\ft{1_B}(0)=\alpha^2\beta$ and $\sum_{x\in \FFF_2^{K_n}} \ft{1_A}(x)^2=\alpha$. Hence, from \eqref{eq:graph-codes-additive-inner-product}, we can infer that $\alpha^2\beta + \gamma\alpha<0$, meaning that $\alpha<-\gamma/\beta$, which proves the claim.
\end{proof}

We now give a characterisation, which may be interesting in its own right, of whether or not a graph $H$ is positive in terms of the Fourier coefficients of $1_{B_n}$, the indicator function of the set of copies of $H$ in $\FFF_2^{K_n}$.

\begin{proposition}\label{prop:graph-codes-positivity}
Let $H$ be a graph and let $B:=B_n$ be the set of copies of $H$ in $\FFF_2^{K_n}$. If $H$ is positive, then there exists $C>0$ such that $\ft{1_B}(x)>-Cn^{v(H)-1}/\vert \FFF_2^{K_n}\vert$ for all $n\in \NN$ and all $x\in \FFF_2^{K_n}$. Conversely, if $H$ is not positive, then there exists $c>0$ such that, for all sufficiently large $n\in \NN$, there is $x\in \FFF_2^{K_n}$ with $\ft{1_B}(x)<-cn^{v(H)}/\vert \FFF_2^{K_n}\vert$.
\end{proposition}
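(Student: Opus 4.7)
The plan is to express $\ft{1_B}(x)$ as a weighted sum over injective homomorphisms of $H$ into $[n]$ and recognise it as proportional to the homomorphism density $t_H(W_x)$, where $W_x$ is the $\pm 1$-valued step kernel given by $W_x(u,v) := (-1)^{x_{uv}}$. Unpacking the Fourier transform, each copy of $H$ in $[n]$ corresponds to $\abs{\op{Aut}(H)}$ injective homomorphisms $\phi \colon V(H) \hookrightarrow [n]$, and, writing $y_\phi$ for the edge set of $\phi(H)$, one has $(-1)^{x^T y_\phi} = \prod_{ij \in E(H)} W_x(\phi(i),\phi(j))$. Combining these observations gives
\begin{align*}
\ft{1_B}(x) = \frac{1}{\abs{\FFF_2^{K_n}} \cdot \abs{\op{Aut}(H)}} \sum_{\phi \colon V(H) \hookrightarrow [n]} \prod_{ij \in E(H)} W_x(\phi(i),\phi(j)).
\end{align*}
Viewing $W_x$ as a step kernel on $[0,1]^2$, the analogous sum over \emph{all} maps $V(H) \to [n]$ equals $n^{v(H)} t_H(W_x)$, and since $\abs{W_x}\leq 1$ with only $O_H(n^{v(H)-1})$ non-injective maps, the displayed sum differs from $n^{v(H)} t_H(W_x)$ by $O_H(n^{v(H)-1})$.

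For the first implication, if $H$ is positive then $t_H(W_x) \geq 0$ for every $x$, so the displayed sum is at least $-Cn^{v(H)-1}$ for a constant $C = C_H$, which immediately yields $\ft{1_B}(x) > -C'n^{v(H)-1}/\abs{\FFF_2^{K_n}}$, as required.

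For the converse, suppose $H$ is not positive and fix a kernel $W^*$ with $t_H(W^*) = -\delta < 0$; after rescaling we may assume $\norm{W^*}_\infty \leq 1$. To exhibit an $x$ with $\ft{1_B}(x)$ sufficiently negative, the plan is to use a probabilistic construction: sample $z_1,\dots,z_n$ independently and uniformly from $[0,1]$, then, independently over the pairs $i<j$, include the edge $ij$ in a random graph $G$ with probability $(1 + W^*(z_i, z_j))/2$, and let $x$ be the indicator of $E(G)$. Then $\EE[W_x(i,j) \mid z] = W^*(z_i, z_j)$. Crucially, for any injective $\phi$ the edges $\set{\phi(i)\phi(j) \colon ij \in E(H)}$ are all distinct, so their coin flips are independent given $z$, and
\begin{align*}
\EE \sqb{\prod_{ij \in E(H)} W_x(\phi(i), \phi(j))} = \EE \sqb{\prod_{ij \in E(H)} W^*(z_{\phi(i)}, z_{\phi(j)})} = t_H(W^*) = -\delta.
\end{align*}
Summing over the $(1-o(1))n^{v(H)}$ injective $\phi$ gives expected sum at most $-\delta n^{v(H)}/2$ for $n$ large, and by averaging some realisation $x$ attains this bound, yielding $\ft{1_B}(x) < -cn^{v(H)}/\abs{\FFF_2^{K_n}}$ with $c := \delta/(2\abs{\op{Aut}(H)}) > 0$.

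The main subtlety lies in the converse direction: one has to realise the sign of the possibly complicated, real-valued quantity $t_H(W^*)$ by a $\pm 1$-valued step kernel on $[n]$. The random rounding argument is tailored to this because on \emph{injective} $\phi$ the edges of the embedded copy of $H$ are all distinct, so the independent coin flips defining $G$ factor multiplicatively in expectation. The discrepancy between injective and arbitrary maps costs only an $O_H(n^{v(H)-1})$ term, which is comfortably absorbed by the gain of order $\delta n^{v(H)}$ coming from strict non-positivity of $t_H(W^*)$.
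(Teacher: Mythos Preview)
Your approach is essentially the paper's: both directions rest on identifying $\ft{1_B}(x)$ with a constant times the injective-homomorphism sum $\sum_{\phi}\prod_{ij\in E(H)}W_x(\phi(i),\phi(j))$ for $W_x(u,v)=(-1)^{x_{uv}}$, together with the $O_H(n^{v(H)-1})$ discrepancy between injective and arbitrary maps. The paper argues the first implication by contrapositive and, for the converse, first approximates the witnessing kernel by a finite weighted graph and then blows it up, whereas you sample $z_1,\dots,z_n$ directly from $[0,1]$; these are cosmetic differences.

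There is one slip in the converse. With your convention $W_x(i,j)=(-1)^{x_{ij}}$, including the edge $ij$ with probability $(1+W^*(z_i,z_j))/2$ gives
\[
\EE[W_x(i,j)\mid z]=(-1)\cdot\tfrac{1+W^*}{2}+(+1)\cdot\tfrac{1-W^*}{2}=-W^*(z_i,z_j),
\]
not $+W^*(z_i,z_j)$ as you claim. You should include $ij$ with probability $(1-W^*(z_i,z_j))/2$, exactly as the paper does. With the wrong sign the expected product over an injective $\phi$ becomes $(-1)^{e(H)}t_H(W^*)$, which is positive when $e(H)$ is odd, so the averaging step does not deliver a negative realisation. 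Once this sign is corrected, your argument goes through.
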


\begin{proof}
Let $\beta:=\vert B\vert/\vert \FFF_2^{K_n}\vert$ and suppose that there exist $n\in \NN$ and $x\in \FFF_2^{K_n}$ such that  $\ft{1_B}(x)<-2v(H)^{v(H)}\beta/n$. Consider the signed graph
\begin{align*}
    W\colon \binom{[n]}{2} \rightarrow \{\pm 1\}, \qquad uv\mapsto \begin{cases}
        -1&\text{if $uv$ is an edge of $x$,}\\
        1&\text{if $uv$ is not an edge of $x$.}
    \end{cases}
\end{align*}
Writing $\mathrm{Aut}(H)$ for the set of automorphisms of $H$, we have that 
\begin{align*}
t_H(W)&\leq \frac{\vert\mathrm{Aut}(H)\vert}{n^{v(H)}} \Big( \vert \{y\in B:\vert E(x)\cap E(y)\vert \text{ is even} \}\vert \\&\hspace*{2.0cm}- \vert\{y\in B:\vert E(x)\cap E(y)\vert \text{ is odd}\}\vert \Big)+v(H)^{v(H)}/n\\
&\leq \frac{1}{2\vert B \vert} \sum_{y\in B}(-1)^{x^Ty}+v(H)^{v(H)}/n,
\end{align*}
which is $\ft{1_B}(x)/2\beta+v(H)^{v(H)}/n<0$, showing that $H$ is not positive.

Conversely, if $H$ is not positive, it follows from standard results in the theory of graph limits that there exists a weighted graph $W$ on vertex set $[n]$ for some $n \in \mathbb{N}$ with weights $w_{ij}\in [-1,1]$ and $w_{ii}=0$ for $i,j\in [n]$ such that $t_H(W)<0$. 
For each $k\in \NN$, consider the random graph $x:=x_k$ with vertex set $[kn]$ such that 
each edge $uv$ is included with probability $(1-w_{\pi(u)\pi(v)})/2$, where $\pi(u)$ is the residue of $u$ after division by $n$. 
Let us denote by $X_{uv}$ the indicator of the event that the edge $uv$ is present. Since all of these events are independent, the expectation of $\ft{1_{B_{kn}}}(x)$ is 
\begin{align*}
\EE\big[\ft{1_{B_{kn}}}(x)\big]
&=\frac{\beta}{\vert B_{kn}\vert} \sum_{y\in B_{kn}} \prod_{uv\in E(y)} \EE\big[(-1)^{X_{uv}}\big]
=\frac{\beta}{\vert B_{kn}\vert} \sum_{y\in B_{kn}} \prod_{uv\in E(y)} w_{\pi(u)\pi(v)}.
\end{align*}
Because the proportion of non-injective maps $V(H)\rightarrow [kn]$ becomes small for large $k$, the expected value of $\ft{1_{B_{kn}}}(x)/\beta$ converges to $t_H(W)<0$, giving the required result.
\end{proof}

Combining~\Cref{lem:fourier} and the first part of~\Cref{prop:graph-codes-positivity}, we see that if $H$ is positive and $n$ is sufficiently large, then
\[d_H(n)<-\frac{\vert \FFF_2^{K_n}\vert}{\vert B_n\vert} \min_{y\in \FFF_2^{K_n}}\ft{1_B}(y) < \frac{\vert \FFF_2^{K_n}\vert}{\vert B_n\vert} \frac{C n^{v(H)-1}}{\vert \FFF_2^{K_n}\vert} \leq \frac{2 C \vert\mathrm{Aut}(H)\vert}{n},\]
concluding the proof of Theorem~\ref{thm:code}.

\medskip

\noindent{\bf Acknowledgements.} The authors are grateful to Sasha Sidorenko for helpful comments on an early draft of this paper. Part of this work was carried out while the third author visited the second author at Yonsei University under the support of the latter's Samsung STF Grant.

\bibliographystyle{plainurl}
\bibliography{references}

\begin{thebibliography}{10}

\bibitem{ACHPS18}
Elad Aigner-Horev, David Conlon, Hi\^{e}p H{\`a}n, Yury Person, and Mathias Schacht.
\newblock Quasirandomness in hypergraphs.
\newblock {\em Electron. J. Combin.}, 25:Paper No. 3.34, 22 pp., 2018.

\bibitem{A24}
Noga Alon.
\newblock Graph-codes.
\newblock {\em European J. Combin.}, 116:Paper No. 103880, 7 pp., 2024.

\bibitem{AGKMS23}
Noga Alon, Anna Gujgiczer, J\'{a}nos K\"{o}rner, Aleksa Milojevi\'{c}, and G\'{a}bor Simonyi.
\newblock Structured codes of graphs.
\newblock {\em SIAM J. Discrete Math.}, 37:379--403, 2023.

\bibitem{BRW24}
Grigoriy Blekherman, Annie Raymond, and Fan Wei.
\newblock Undecidability of polynomial inequalities in weighted graph homomorphism densities.
\newblock {\em Forum Math. Sigma}, 12:Paper No. e40, 2024.

\bibitem{ACHLL12}
Omar~Antol\'{\i}n Camarena, Endre Cs\'{o}ka, Tam\'{a}s Hubai, G\'{a}bor Lippner, and L\'{a}szl\'{o} Lov\'asz.
\newblock Positive graphs.
\newblock {\em European J. Combin.}, 52B:290--301, 2016.

\bibitem{CGW89}
Fan R.~K. Chung, Ronald~L. Graham, and Richard~M. Wilson.
\newblock Quasi-random graphs.
\newblock {\em Combinatorica}, 9:345--362, 1989.

\bibitem{CL17}
David Conlon and Joonkyung Lee.
\newblock Finite reflection groups and graph norms.
\newblock {\em Adv. Math.}, 315:130--165, 2017.

\bibitem{CL21}
David Conlon and Joonkyung Lee.
\newblock Sidorenko's conjecture for blow-ups.
\newblock {\em Discrete Anal.}, 2021:Paper No. 2, 13 pp., 2021.

\bibitem{CLS24}
David Conlon, Joonkyung Lee, and Alexander Sidorenko.
\newblock Extremal numbers and {S}idorenko's conjecture.
\newblock To appear in Int. Math. Res. Not., 2024.

\bibitem{Cs13}
P\'eter Csikv\'ari.
\newblock Note on the smallest root of the independence polynomial.
\newblock {\em Combin. Probab. Comput.}, 22:1--8, 2013.

\bibitem{Erd90}
Paul Erd\H{o}s.
\newblock Problems and results on graphs and hypergraphs: similarities and differences.
\newblock In {\em Mathematics of {R}amsey theory}, volume~5 of {\em Algorithms Combin.}, pages 12--28. Springer, Berlin, 1990.

\bibitem{FMS20}
Asaf Ferber, Gweneth McKinley, and Wojciech Samotij.
\newblock Supersaturated sparse graphs and hypergraphs.
\newblock {\em Int. Math. Res. Not.}, 2020:378--402, 2020.

\bibitem{FS90}
David~C. Fisher and Anita~E. Solow.
\newblock Dependence polynomials.
\newblock {\em Discrete Math.}, 3:251--258, 1990.

\bibitem{GS00}
Massimiliano Goldwurm and Massimo Santini.
\newblock Clique polynomials have a unique root of smallest modulus.
\newblock {\em Inform. Process. Lett.}, 75:127--132, 2000.

\bibitem{GL}
Tim Gowers and Jason Long.
\newblock Personal communication.
\newblock 2018.

\bibitem{HN11}
Hamed Hatami and Serguei Norine.
\newblock Undecidability of linear inequalities in graph homomorphism densities.
\newblock {\em J. Amer. Math. Soc.}, 24:547--565, 2011.

\bibitem{KNRS10}
Yoshiharu Kohayakawa, Brendan Nagle, Vojt{\v{e}}ch R{\"o}dl, and Mathias Schacht.
\newblock Weak hypergraph regularity and linear hypergraphs.
\newblock {\em J. Combin. Theory Ser. B}, 100:151--160, 2010.

\bibitem{L08}
L\'{a}szl\'{o} Lov\'{a}sz.
\newblock Graph homomorphisms: Open problems.
\newblock Manuscript available at http://www.cs.elte.hu/~lovasz/problems.pdf, 2008.

\bibitem{Lov11}
L\'{a}szl\'{o} Lov\'{a}sz.
\newblock Subgraph densities in signed graphons and the local {S}imonovits--{S}idorenko conjecture.
\newblock {\em Electron. J. Combin.}, 18:Paper 127, 21 pp., 2011.

\bibitem{RRS18}
Christian Reiher, Vojt{\v{e}}ch R{\"o}dl, and Mathias Schacht.
\newblock On a generalisation of {M}antel’s theorem to uniformly dense hypergraphs.
\newblock {\em Int. Math. Res. Not.}, 16:4899--4941, 2018.

\bibitem{Sid92}
Alexander Sidorenko.
\newblock Inequalities for functionals generated by bipartite graphs.
\newblock {\em Discrete Math. Appl.}, 2:489--504, 1992.

\bibitem{Sid93}
Alexander Sidorenko.
\newblock A correlation inequality for bipartite graphs.
\newblock {\em Graphs Combin.}, 9:201--204, 1993.

\bibitem{Sid22}
Alexander Sidorenko.
\newblock On positive hypergraphs.
\newblock {\em European J. Combin.}, 106:Paper No. 103574, 5 pp., 2022.

\bibitem{V24}
Leo Versteegen.
\newblock Upper bounds for linear graph codes.
\newblock Preprint available at ar{X}iv:2310.19891 [math.CO].

\bibitem{Z15}
Yufei Zhao.
\newblock Hypergraph limits: a regularity approach.
\newblock {\em Random Structures Algorithms}, 47:205--226, 2015.

\end{thebibliography}

\end{document}